\numberwithin{equation}{section}
\newcommand{\mc}[1]{\mathcal{#1}}
\newcommand{\mbb}[1]{\mathbb{#1}}
\newtheorem{theorem}{Theorem}[section]
\newtheorem{lemma}[theorem]{Lemma}
\newtheorem{corollary}[theorem]{Corollary}
\newtheorem{claim}{Claim}[theorem]
\newtheorem{ntheorem}{Theorem}
\numberwithin{theorem}{section}
\theoremstyle{definition}
\newtheorem{definition}[theorem]{Definition}
\theoremstyle{remark}
\newtheorem{remark}{Remark}
\newcommand{\setm}{\setminus}
\newcommand{\empt}{\emptyset}
\newcommand{\subs}{\subset}
\newcommand{\dom}{\operatorname{dom}}
\newcommand{\ran}{\operatorname{ran}}
\newcommand{\cf}{\operatorname{cf}} 
\newcommand{\supp}{\operatorname{supp}}
\def\br#1;#2;{\bigl[ {#1} \bigr]^ {#2} }
\newcommand{\term}[1]{TERM({#1})}
\newcommand{\fx}[2]{#1\cup\{#2\}}
\newcommand{\extend}[1]{{#1}^+}
\newcommand{\trans}[2]{\rho_{#1,#2}}
\newcommand{\model}[1]{\mc M_{#1}}
\newcommand{\trace}[2]{#1\lceil #2}
\newcommand{\itrace}[2]{#1\lceil^* #2}
\newcommand{\perm}[1]{\operatorname{S}(#1)}
\newcommand{\parcperm}[2]{\operatorname{S}_{#2}(#1)}
\newcommand{\bijp}{\operatorname{{Bij_p}}}
\newcommand{\fin}{\operatorname{Fin}}
\def\<{\left\langle}
\def\>{\right\rangle}
\newcommand{\gen}[1]{\<#1\>_{gen}}
\author[S. Shelah]{Saharon Shelah}
\address
      { Institute of Mathematics\\Hebrew University, Jerusalem  }
\email{}
\author[L. Soukup]{Lajos Soukup}
\address
      { Alfr{\'e}d R{\'e}nyi Institute of Mathematics, Budapest, Hungary  }
\email{soukup@renyi.hu}
\urladdr{http://www.renyi.hu/$\tilde{}$soukup}
\subjclass[2000]{03E35, }
\keywords{permutation group, transitive, homogeneous}
\title{On $\kappa$-homogeneous, but not $\kappa$-transitive permutation groups}
	 \thanks{The first author was supported by European Research Council, grant no. 338821. \\ 
	 Publication Number F1886}
\thanks{The second author was supported  by NKFIH grants no. K113047 and K129211}
\date{Nov 24, 2019.}
\begin{document}

\begin{abstract}

	A permutation group  $G$ on a set $A$ is  {\em ${\kappa}$-homogeneous}
	iff for all  $X,Y\in\br A;{\kappa};$ with
	$|A\setm X|=|A\setm Y|=|A|$ there is
	a $g\in G$ with $g[X]=Y$.
	$G$ is 
  {\em ${\kappa}$-transitive}
	iff for any injective function  $f$  with $\dom(f)\cup\ran(f)\in \br A;\le {\kappa};$  
	and $|A\setm \dom(f)|=|A\setm \ran(f)|=|A|$ there is
	a $g\in G$ with $f\subs g$.

	Giving a partial answer to a question of P. M. Neumann \cite{N} we show that
	there is an ${\omega}$-homogeneous but not ${\omega}$-transitive 
	permutation group on a cardinal ${\lambda}$ provided 
	\begin{enumerate}[(i)]
	\item ${\lambda}<{\omega}_{\omega}$, or 
	\item $2^{\omega}<{\lambda}$, and
	${\mu}^{\omega}={\mu}^+$ and  $\Box_{\mu}$ hold  
	for each ${\mu}\le{\lambda}$ with ${\omega}=\cf({\mu})<{{\mu}}$, or
	\item our model was obtained by adding  ${\omega}_1$ many Cohen  generic reals 
	to some ground model.  
  \end{enumerate} 
	
For ${\kappa}>{\omega}$ we give a method to  construct large   ${\kappa}$-homogeneous, but not 
${\kappa}$-transitive permutation groups. Using this method  we show that 
there exists  ${\kappa}^+$-homogeneous, but not ${\kappa}^+$-transitive
permutation groups on ${\kappa}^{+n}$ for each infinite cardinal ${\kappa}$
and natural number $n\ge 1$ provided $V=L$.
\end{abstract}

 \maketitle

\section{Introduction}
 
Denote by $\perm A$ the group of all permutations of
the set $A$.
The subgroups of $\perm A$ are  called
{\em permutation groups on $A$}. 

We say that
a permutation group  $G$ on $A$ is  {\em ${\kappa}$-homogeneous}
iff for all  $X,Y\in\br A;{\kappa};$ with
$|A\setm X|=|A\setm Y|=|A|$ there is
a $g\in G$ with $g[X]=Y$.

We say that
a permutation group  $G$ on $A$ is  {\em ${\kappa}$-transitive}
iff for any injective function  $f$  with $\dom(f)\cup\ran(f)\in \br A;\le {\kappa};$  
and $|A\setm \dom(f)|=|A\setm \ran(f)|$ there is
a $g\in G$ with $f\subs g$.

In this paper we give a partial answer to the following question which  
was raised by 
P.N. Neumann   in \cite[Question 3]{N}: 
\begin{enumerate}[(i)]
\item[] {\em Suppose that ${\kappa}\le {\lambda}$ are infinite  cardinals. 
Does there exist a permutation group
on ${\lambda}$ that are ${\kappa}$-homogeneous, but not ${\kappa}$-transitive?}
\end{enumerate}

In section 2 we show that there exist ${\omega}$-homogeneous, but not ${\omega}$-transitive permutation groups
on ${\lambda}<{\omega}_{\omega}$ in ZFC, and on any infinite ${\lambda}$ if $V=L$ (see Theorem \ref{tm:countable-main}).

In section 3 we develop a general  method to obtain large ${\kappa}$-homogeneous, but not ${\kappa}$-transitive
permutation groups for arbitrary ${\kappa}\ge {\omega}$ (see Theorem \ref{tm:-main-uncountable-transitive-group}).
Applying our method we show that if ${\kappa}^{\omega}={\kappa}$, ${\lambda}={\kappa}^{+n}$ for some $n<{\omega}$,
and $\Box_{{\nu}}$ holds for each  ${\kappa}\le {\nu}<{\lambda}$,
then there is a ${\kappa}$-homogeneous, but not ${\kappa}$-transitive 
permutation group on ${\lambda}$ (Corollary \ref{cor:largekappa}).

Finally in section 4, using some lemmas from section 3,  we prove that after adding ${\omega}_1$ Cohen reals in the generic extension 
for each infinite ${\lambda}$ there exist ${\omega}$-homogeneous, but not ${\omega}$-transitive permutation groups
on ${\lambda}$  (Theorem \ref{tm:ohnot_con}). 

\medskip

Our notation is standard. 

\begin{definition}
	If ${\lambda}$ is fixed and $f\in S(A)$ for some $A\subs {\lambda}$,
	we take $$\extend{f}=f\cup (\operatorname{id}\restriction{({\lambda}\setminus A)})\in S({\lambda}).$$	
	\end{definition}

Given a family of functions, $\mc G$, we say that a function $y$ is
{\em $\mc G$-large}  	 iff	
\begin{displaymath}
|y\setm \bigcup \mc H|=|y|
\end{displaymath}
for each finite $\mc H\subs \mc G$.

We say that a permutation group on $A$ is {\em ${\kappa}$-intransitive}
iff there is 
a  $G$-large injective function 
$y$ with $\dom(y)\cup\ran (y)\in \br A;{\kappa};$ and 
$|A \setm \dom(y)|=|A\setminus \ran (y))|=|A|$.

A ${\kappa}$-intransitive group is clearly not ${\kappa}$-transitive.

\section{${\omega}$-homogeneous but not ${\omega}$-transitive}

\begin{definition}
Given a set $A$
we say that a family $\mc A\subs \br {A};{\omega};$ is
{\em nice on ${A}$}  iff $\mc A$ has an enumeration
 $\{A_{\alpha}:{\alpha}<{\mu}\}$ such that 
\begin{enumerate}[({N}1)]
	\item $\mc A$ is cofinal in  $\<\br {A};{\omega};,\subs\>$, 
	\item \label{n2}for each ${\beta}<{\mu}$ there is a countable set $I_{\beta}\in \br {\beta};{\omega};$
	such that 
	 for all ${\alpha}<{\beta}$ there is 
	 a finite set $J_{{\alpha},{\beta}}\in \br I_{\beta};<{\omega};$ such that 
	\begin{displaymath}
	A_{\alpha}\cap A_{\beta}\subset \bigcup_{{\zeta}\in J_{{\alpha},{\beta}}}A_{\zeta}.	
	\end{displaymath}
\end{enumerate}
\end{definition}

\begin{theorem}\label{tm:nice2ord}
Assume that  ${\lambda}$ is an infinite cardinal, and   
$\mc A\subs \br {\lambda};{\omega};$ is   a nice family on ${\lambda}$.
Then for each $A\in \mc A$ there is an ordering $\le_A$ on $A$ such that 
\begin{enumerate}[(1)]
	\item $tp(A,\le_A)={\omega}$ for each $A\in \mc A$,
	\item \label{fin-part} if $A,B\in \mc A$, then there is a partition $\{C_i:i<n\}$
	of $A\cap B$ into finitely many subsets such that 
	$\le_A\restriction C_i=\le_B\restriction C_i$ for all $i<n$.
\end{enumerate}

\end{theorem}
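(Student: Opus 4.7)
The plan is to build the orderings $\le_{A_{\beta}}$ by transfinite induction on ${\beta}$, using the countable set $I_{\beta}$ from (N\ref{n2}) as a skeleton of previously-built orders that $\le_{A_{\beta}}$ must respect. At stage ${\beta}$, enumerate $I_{\beta}=\{{\zeta}_n:n<{\omega}\}$ (truncating if $I_{\beta}$ is finite) and partition $A_{\beta}$ into the pairwise disjoint pieces
\[
B_n=(A_{\beta}\cap A_{{\zeta}_n})\setm \bigcup_{m<n}A_{{\zeta}_m} \qquad\text{and}\qquad B_\infty=A_{\beta}\setm\bigcup_{n<{\omega}}A_{{\zeta}_n}.
\]
Each $B_n$ is a subset of $A_{{\zeta}_n}$ and so inherits from $\le_{A_{{\zeta}_n}}$ (already defined by induction) an ordering of type $\le {\omega}$; give $B_\infty$ any well-order of type $\le {\omega}$.

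To combine these pieces into a single ordering of $A_{\beta}$ of type ${\omega}$, I would shuffle them using a diagonal-style bijection of ${\omega}$ with the pairs $(n,j)\in ({\omega}+1)\times{\omega}$ that label actual elements, chosen so that for each fixed $n$ the $j$-coordinate is increasing. If $b^n_0<b^n_1<\dots$ enumerates $B_n$ in its inherited order, this expresses $A_{\beta}$ as a single increasing sequence $(b^{n_k}_{j_k})_{k<{\omega}}$; the induced $\le_{A_{\beta}}$ has order type ${\omega}$, and by construction satisfies $\le_{A_{\beta}}\restriction B_n = \le_{A_{{\zeta}_n}}\restriction B_n$ for every $n$.

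Verification of condition (\ref{fin-part}) is then a bookkeeping step powered by (N\ref{n2}). Given ${\alpha}<{\beta}$, (N\ref{n2}) furnishes a finite $J_{{\alpha},{\beta}}\subs I_{\beta}$ with $A_{\alpha}\cap A_{\beta}\subs \bigcup_{{\zeta}\in J_{{\alpha},{\beta}}}A_{\zeta}$. Letting $N$ be the largest $n$ for which ${\zeta}_n\in J_{{\alpha},{\beta}}$, one reads off the finite partition
\[
A_{\alpha}\cap A_{\beta} \;=\; \bigsqcup_{n\le N}(A_{\alpha}\cap B_n).
\]
On each piece $A_{\alpha}\cap B_n$, the order $\le_{A_{\beta}}$ coincides with $\le_{A_{{\zeta}_n}}$ by construction, while the induction hypothesis applied to the pair $({\alpha},{\zeta}_n)$ (both ordinals are $<{\beta}$) supplies a finite partition of $A_{\alpha}\cap A_{{\zeta}_n}$ on which $\le_{A_{\alpha}}$ and $\le_{A_{{\zeta}_n}}$ agree, hence also on the subset $A_{\alpha}\cap B_n$. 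Taking the common refinement over the $N{+}1$ pieces produces a finite partition of $A_{\alpha}\cap A_{\beta}$ on which $\le_{A_{\alpha}}$ and $\le_{A_{\beta}}$ coincide.

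The construction is not hard once the right setup is in place; the main substantive point, and the only place where any of the hypotheses gets used, is the observation that (N\ref{n2}) lets one reduce an arbitrary pair $({\alpha},{\beta})$ to finitely many members of the fixed countable skeleton $I_{\beta}$. Everything else is a standard shuffling-of-$\omega$-orders argument.
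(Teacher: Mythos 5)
Your proof is correct and follows essentially the same route as the paper: the same transfinite induction, with your pieces $B_n=(A_{\beta}\cap A_{{\zeta}_n})\setminus\bigcup_{m<n}A_{{\zeta}_m}$ being exactly the paper's sets $A_{\beta}\cap D_i$, and the same common-refinement verification of condition (2) via the inductive hypothesis applied to the pairs $({\alpha},{\zeta}_n)$. The only difference is that you spell out the diagonal shuffle producing order type ${\omega}$ and the leftover piece $B_\infty$, which the paper leaves implicit.
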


\begin{proof}
Fix an enumeration $\{A_{\beta}:{\beta}<{\mu}\}$ of $\mc A$
witnessing that $\mc A$ is nice.

We will define $\le_{A_{\beta}}$ by induction on ${\beta}<{\mu}$.

Assume that $\le_{A_{\alpha}}$ is defined for ${\alpha}<{\beta}$.

By (N\ref{n2}) we can fix a countable set   $I_{\beta}=\{{\beta}_i:i<{\omega}\}\in \br {\beta};{\omega};$
	such that 
	 for all ${\alpha}<{\beta}$ there is 
	 $n_{\alpha}<{\omega}$ such that 
	\begin{displaymath}
	A_{\alpha}\cap A_{\beta}\subset \bigcup_{i<n_{\alpha}}A_{{\beta}_i}.	
	\end{displaymath}
Choose an order  $\le_{A_{\beta}}$  on $A_{\beta} $ such that 
\begin{enumerate}[(i)]
	\item  for each $i<{\omega}$ writing 
	$D_i=A_{{\beta}_i}	\setminus\bigcup_{j<i}A_{{\beta}_j}$ we have 
	\begin{displaymath}
	\le_{A_{\beta}}\restriction (A_{\beta}\cap D_i)\quad=\quad
	\le_{A_{{\beta}_i}}\restriction (A_{\beta}\cap D_i);
	\end{displaymath}
\item $tp(A_{\beta},\le_{A_{{\beta}}})={\omega}$.
\end{enumerate}

\medskip

By induction on ${\beta}$ we show that (\ref{fin-part}) holds for ${\beta}$.

Assume that (\ref{fin-part}) holds for ${\beta}'<{\beta}$.

To check (\ref{fin-part}) for ${\beta}$ fix ${\alpha}<{\beta}$.

To define $\le _{\beta}$ we considered a set
$I_{\beta}=\{{\beta}_i:i<{\omega}\}\in \br {\beta};{\omega};$
	such that 
	  we had 
	 $n_{\alpha}<{\omega}$ with 
	\begin{displaymath}
	A_{\alpha}\cap A_{\beta}\subset \bigcup_{i<n_{\alpha}}A_{{\beta}_i}.	
	\end{displaymath}
For $i<n_{\alpha}$ let $C'_i=A_{\alpha}\cap A_{\beta}\cap D_i$, where $D_i=A_{{\beta}_i}	\setminus\bigcup_{j<i}A_{{\beta}_j}$ .
Then $\{C'_i:i<n_{\alpha}\}$ is a partition of $A_{\alpha}\cap A_{\beta}$
and  $$\le _{A_{\beta}}\restriction C'_i=\le_{A_{{\beta}_i}}\restriction C'_i$$
by (i).
By the inductive hypothesis, 
$A_{{\beta}_i}\cap A_{\alpha}$ has a partition into finitely many pieces
$\{C_{i,j}:j<k_i\}$ such that 
$\le_{A_{\alpha}}\restriction C_{i,j}=\le_{A_{\beta_i}}\restriction C_{i,j}$
Then the partition
\begin{displaymath}
\{C'_i\cap C_{i,j}:i<n, j<k_i\}	
\end{displaymath}
of $A_{\alpha}\cap A_{\beta}$ works for ${\alpha}$  and ${\beta}$.
Indeed,
\begin{displaymath}
\le_{ A_{\alpha}}\restriction	 C'_i\cap C_{i,j}\quad =
\quad \le_{ A_{{\beta}_i}}\restriction	 C'_i\cap C_{i,j}\quad =
\quad \le_{A_{{\beta}}}\restriction	 C'_i\cap C_{i,j}.
\end{displaymath}
\end{proof}

\begin{theorem}\label{tm:ord2group}
Assume that  ${\lambda}$ is an infinite cardinal, 
 $\mc A\subs \br {\lambda};{\omega};$  is a cofinal family
and  
 for each $A\in \mc A$ we have an ordering $\le_A$ on $A$ such that 
\begin{enumerate}[(1)]
	\item $tp(A,\le_A)={\omega}$ for each $A\in \mc A$,
	\item if $A,B\in \mc A$, then there is a partition $\{C_i:i<n\}$
	of $A\cap B$ into finitely many subsets such that 
	$\le_A\restriction C_i=\le_B\restriction C_i$ for all $i<n$.
\end{enumerate}
Then there is a permutation group on ${\lambda}$ that is ${\omega}$-homogeneous
and  ${\omega}$-intransitive.	
\end{theorem}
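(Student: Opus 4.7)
The plan is to define $G$ to consist of those $g \in \perm{{\lambda}}$ such that, for some $C \in \mc A$ containing $\supp(g)$, the restriction $g\restriction C$ admits a partition of $C$ into finitely many pieces on each of which $g$ is strictly $\le_C$-increasing. I will first check $G$ is a subgroup of $\perm{{\lambda}}$, then that it is ${\omega}$-homogeneous, and finally exhibit an ${\omega}$-intransitivity witness.

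To check $G$ is closed under composition, take $g_1,g_2\in G$ with witnesses $C_1,C_2\in\mc A$, pick $C\in\mc A$ with $C_1\cup C_2\subs C$ by cofinality, and use hypothesis~(2) to get a finite partition on which $\le_{C_i}$ and $\le_C$ agree on $C_i\cap C=C_i$. Refining each $g_i$'s original pieces by these coherence pieces, and by the $g_i$-preimages of coherence pieces on the image side, yields $g_i\restriction C$ as $\le_C$-piecewise increasing with finitely many pieces, and then $g_1g_2\restriction C$ has the common refinement as its partition; inverses are immediate (swap pieces with their images). For ${\omega}$-homogeneity, given $X,Y\in\br{{\lambda}};{{\omega}};$ with $|{\lambda}\setm X|=|{\lambda}\setm Y|={\lambda}$, pick $C\in\mc A$ with $X\cup Y\subs C$ and $|C\setm X|=|C\setm Y|={\omega}$ (feasible by cofinality); in $(C,\le_C)\cong{\omega}$, glue the unique order-preserving bijections $X\to Y$ and $C\setm X\to C\setm Y$ into a $2$-piece permutation ${\sigma}$ of $C$, whereupon $\extend{{\sigma}}\in G$ and $\extend{{\sigma}}[X]=Y$.

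For ${\omega}$-intransitivity, fix any $A\in\mc A$ with $\le_A$-enumeration $\{a^A_i:i<{\omega}\}$ and define $y:A\to A$ by $y(a^A_i)=a^A_{{\tau}(i)}$ for a permutation ${\tau}$ of ${\omega}$ chosen so that no finite partition of ${\omega}$ makes ${\tau}$ strictly increasing on every piece; concretely, the map that reverses each dyadic block $[2^n,2^{n+1})$ works, since any strictly $\le$-increasing subset of ${\omega}$ can meet each such block in at most one point, while the $n$-th block has $2^n$ points. To verify $y$ is $G$-large, suppose for contradiction that a finite $\mc H\subs G$ covers $y$ except on finitely many pairs. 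Enlarging all witnesses to a common $C\in\mc A$ with $A\subs C$, on each $D_g=\{a\in A:g(a)=y(a)\}$ one has $y\restriction D_g=g\restriction D_g$, hence $\le_C$-piecewise increasing with finitely many pieces. Using the coherence of $\le_C$ and $\le_A$ on $A$ to refine both in the domain and (via $y^{-1}$ of the agreement pieces) in the image, $y\restriction D_g$ becomes $\le_A$-piecewise increasing with finitely many pieces. Summing over $\mc H$ exhibits $y$ as $\le_A$-piecewise increasing on a cofinite subset of $A$ with finitely many pieces, which translates via the $\le_A$-enumeration into ${\tau}$ being piecewise increasing on a cofinite subset of ${\omega}$ with finitely many pieces, contradicting the choice of ${\tau}$.

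The main obstacle is the tension between the two demands on $G$: it must contain enough $2$-piece piecewise order-preserving permutations to achieve homogeneity, yet be structured enough via the coherence of the $\le_A$'s that the wild ${\tau}$ cannot be captured by any finite union of elements of $G$. The finite-partition coherence of condition~(2) is the linchpin throughout: it keeps every refinement finite both in the group-closure step and in the intransitivity step, so the piece-counts stay bounded and the combinatorial structure of the block-reversing ${\tau}$ produces the needed contradiction.
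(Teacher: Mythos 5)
Your argument is essentially the paper's: the same piecewise-$\le_A$-increasing permutations (you verify directly that they form a group, where the paper takes the group generated by the sets $\mc G_A$ and instead proves the corresponding finite-covering statement for words in the generators), the same two-piece order-preserving gluing for homogeneity, and the same dyadic-block-reversing witness $y$ for intransitivity. One small repair: the paper defines its witness on a subset $B\subs A$ with $|A\setm B|={\omega}$ rather than on all of $A$, because ${\omega}$-intransitivity requires $|{\lambda}\setm\dom(y)|=|{\lambda}\setm\ran(y)|=|{\lambda}|$ and, when ${\lambda}={\omega}$, the cofinal family $\mc A$ may consist only of cofinite subsets of ${\omega}$; restricting your $y$ to, say, every other dyadic block fixes this without affecting the counting argument.
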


\begin{proof}
For $A\in \mc A$ let
\begin{multline}\notag
\mathcal G_A=\{\extend f\in \perm {\lambda}: 
f\in \perm A \land 
\text{there is a finite partition $\{C_i:i<n\}$ of $A$}\\\text{ such that $f\restriction C_i$ is
$\le_A$-order preserving}\}.
\end{multline}
Let $G$ be the permutation group on ${\lambda}$ generated by 
\begin{displaymath}
\bigcup\{\mc G_A:A\in \mc A\}.
\end{displaymath}

\begin{claim}
$G$ is ${\omega}$-homogeneous.
\end{claim}

Indeed, let  $X,Y\in\br{\lambda};{\omega};$ with
$|{\lambda}\setm X|=|{\lambda}\setm Y|={\lambda}$. 
Pick $A\in \mc A$ such that $X\cup Y\subs A$ and $|A\setm X|=|A\setm Y|={\omega}$.

Let $c$ be the unique $\le_A$-monotone bijection between $X$ and $Y$
and $d$ be the unique $\le_A$-monotone bijection between 
$A\setm X$ and $A\setm Y$. Then taking $g=c\cup d$ we have 
$\extend g\in \mc G_A\subs G$
and $\extend g[X]=Y$.

\begin{claim}
$G$ is ${\omega}$-intransitive.
\end{claim}

Pick $A\in \mc A$ and choose $B\in \br A;{\omega};$
such that $|A\setm B|={\omega}$.

Let $b_0,b_1,\dots$ be the $\le_A$-increasing enumeration of $B$.
Define a bijection $y:B\to {\omega}$ as follows:
for $i<{\omega}$ and $j<2^i$ let
$$y(b_{2^i+j})=b_{2^{i+1}-j}.$$ 
Observe that if $c$ is $\le_A$-monotone then
\begin{displaymath}
|\{i<{\omega}:|\{j<2^i:c(b_{2^i+j})=r(b_{2^i+j})\}|\ge 2\}|\le 1.
\end{displaymath}
Indeed, if $|\{j<2^i:c(b_{2^i+j})=y(b_{2^i+j})\}|\ge 2$,
then $c$ should be $\le_A$-decreasing, and if
 $|\{i:\{j<2^i:c(b_{2^i+j})=y(b_{2^i+j})\}\ne \empt\}|\ge 2$,
 then $y$ should be $\le_A$-increasing.
\smallskip

So $y$ can not be covered by finitely many $\le_A$-monotone functions.
But for any   $h\in G$, $h\cap (A\times A)$  can be covered by finitely many 
$\le_A$-monotone functions  by (2) and by the construction of $G$. 

Thus  $y$ is $G$-large.
\end{proof}

To obtain nice families we  recall some topological results. 
We say that a topological space  $X$ is {\em splendid} (see \cite{JNW}) iff it is
countably compact, locally compact, locally countable  such that 
$|\overline A|={\omega}$ for each $A\in \br X;{\omega};$. 

We need the following theorem:

\begin{ntheorem}[Juhasz, Nagy, Weiss, \cite{JNW}]
If 
	\begin{enumerate}[(i)]
		\item ${\kappa}<{\omega}_{\omega}$, or 
		\item $2^{\omega}<{\kappa}$, $\cf({\kappa})>{\omega}$ and
		${\mu}^{\omega}={\mu}^+$ and  $\Box_{\mu}$ hold  
		for each ${\mu}<{\kappa}$ with ${\omega}=\cf({\mu})<{{\mu}}$,
		\end{enumerate} 
then there is a splendid space $X$ of size ${\kappa}$.	
\end{ntheorem}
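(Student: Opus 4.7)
The plan is to construct the splendid space on the ordinal $\kappa$ itself by transfinite recursion, assigning to each $\alpha < \kappa$ a countable compact clopen basic neighborhood $U_\alpha \subs \alpha+1$. Under such a setup the four defining properties unwind as follows: local countability and $|\overline{A}|=\omega$ for countable $A$ follow from the $U_\alpha$'s being countable together with enough coherence; local compactness is built in; and countable compactness reduces to producing, for every countable $A \subs \kappa$, some $\alpha$ such that $A \cap U_\alpha$ is infinite.

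At a limit $\alpha$ of countable cofinality one fixes a cofinal $\omega$-sequence $\alpha_n \nearrow \alpha$, picks pairwise disjoint cofinite clopen subsets $V_n \subs U_{\alpha_n}$, and sets $U_\alpha = \{\alpha\} \cup \bigcup_n V_n$; this automatically makes $\alpha$ an accumulation point of $\{\alpha_n : n < \omega\}$ and keeps $U_\alpha$ countable and compact. At a limit $\alpha$ with $\cf(\alpha) > \omega$, countable subsets of $\alpha$ are already bounded below $\alpha$, so the topology on $\alpha$ extends painlessly. The recursion is orchestrated by a pre-chosen enumeration of the countable subsets of $\kappa$, and the book-keeping chooses, for each such $A$ unbounded in some relevant limit, a stage at which to install an accumulation point.

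The cardinal-arithmetic and combinatorial hypotheses are what make this book-keeping possible. In case (i), $\omega_n^\omega = \omega_n$ for finite $n$ (a ZFC fact) bounds the number of countable subsets to be processed below each $\omega_{n+1}$, so the construction proceeds by induction on $n$ with $\kappa \le \omega_n$. In case (ii), $\mu^\omega = \mu^+$ plays the analogous counting role at each successor step, while $\Box_\mu$ at singular $\mu$ of countable cofinality supplies a coherent system of cofinal sequences at ordinals in $[\mu, \mu^+)$ of countable cofinality, consistent with a chosen $\mu_n \nearrow \mu$; this coherence is what lets the recursion pass through the gap created by the singularity of $\mu$.

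The main obstacle is the limit step at a singular $\mu$ of countable cofinality. There, unbounded countable $A \subs \mu$ must be furnished with accumulation points in the space, and the corresponding compact clopen neighborhoods must mesh consistently with the splendid structures already built on each piece $[\mu_n, \mu_{n+1})$ of the fixed cofinal sequence. Without a coherent ladder system, which is precisely what $\Box_\mu$ provides, the gluing can fail; results descending from Jensen's covering lemma show that some such coherence assumption is in fact necessary once $\kappa$ exceeds $\omega_\omega$, so case (i) is the unconditional window and case (ii) captures the combinatorial principles that buy the analogue at higher cardinals.
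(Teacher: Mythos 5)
The paper does not actually prove this statement: it is quoted verbatim from \cite{JNW}, and the accompanying Remark only records that the stated strengthening (with the explicit hypotheses in place of $V=L$) is obtained by combining Lemmas 7, 9 and 16 of \cite{JNW} with the remark after its Theorem 8. So your sketch has to be measured against the Juh\'asz--Nagy--Weiss construction itself, and there it has a genuine gap in case (i). Your bookkeeping --- ``a pre-chosen enumeration of the countable subsets of $\kappa$'' with a stage assigned to each set needing an accumulation point --- requires listing $\bigl[{\kappa}\bigr]^{\omega}$ in ${\kappa}$ (at any rate, fewer than ${\omega}_{\omega}$) steps. But $|\bigl[{\kappa}\bigr]^{\omega}|={\kappa}^{\omega}\ge 2^{\omega}$, and case (i) carries no hypothesis bounding $2^{\omega}$; in particular the assertion that ${\omega}_n^{\omega}={\omega}_n$ is a ZFC fact is false (already ${\omega}_1^{\omega}=2^{\omega}$, so it fails whenever CH fails). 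If $2^{\omega}>{\omega}_{\omega}$ your recursion cannot even get through ${\omega}_1$.

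The actual argument sidesteps this: the induction is on the cardinal, passing from a splendid space of size ${\kappa}$ to one of size ${\kappa}^+$, and countable compactness of the union $\bigcup_{{\alpha}<{\kappa}^+}X_{\alpha}$ of open subspaces is not obtained by treating countable sets one at a time. It comes for free from the clause $|\overline A|={\omega}$: every countable set already lies in some open $X_{\alpha}$ and has compact closure there, so no enumeration of $\bigl[{\kappa}\bigr]^{\omega}$ is ever needed. The real work --- which your sketch compresses into ``enough coherence'' --- is to preserve, when new points are attached, the property that closures of countable sets remain countable and compact; and it is exactly at singular ${\mu}$ of countable cofinality that ${\mu}^{\omega}={\mu}^+$ and $\Box_{\mu}$ are consumed for this purpose. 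Your remarks on the role of $\Box_{\mu}$ and on the necessity of some additional hypothesis past ${\omega}_{\omega}$ (compare the paper's Remark citing \cite{JShS}) are right in spirit, but as written the construction below ${\omega}_{\omega}$ rests on an incorrect cardinal-arithmetic claim and so does not go through.
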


\begin{remark}
In 	\cite[Theorem 11]{JNW} the authors formulated 
a bit weaker result: {\it if $V=L$ and $\cf({\kappa})>{\omega}$
then there is a splendid space $X$ of size ${\kappa}$}.
However, 
to obtain that results they  combined  ``Lemmas 7, 9 and 16 
with the remark after Theorem 8''
and their arguments  used only the assumptions of 
 the theorem above. 
\end{remark}

\begin{lemma}\label{lm:splendid2nice}
If $X$ is a splendid space, $\mc U$ is the family of compact open 
subsets of $X$, and $Y\subs X$, then $\trace {\mc U}Y=\{U\cap Y: U\in \mc U\}$ is nice on $Y$.  		
\end{lemma}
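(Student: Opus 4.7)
My approach is to take $\mc A$ to consist of the traces on $Y$ of a carefully enumerated cofinal subfamily of $\mc U$. The essential facts about a splendid space $X$ that I use are: every $U\in\mc U$ is countable (being compact in a locally countable Hausdorff space); $\mc U$ is closed under finite unions and finite intersections; every countable $Z\subs X$ has countable compact closure (by the splendid property together with countable compactness, since a countable closed subset of a countably compact space is compact); and every point of $X$ has a compact open neighborhood (I will use that the splendid spaces at issue carry a base of compact opens), so every countable compact set lies inside some single $U\in\mc U$ via a finite subcover of compact open neighborhoods. Applied to $\overline Z$ for $Z\in\br Y;\omega;$, these facts give (N1) at once.

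For (N2), write $\kappa=|X|$ and enumerate $X=\{x_\alpha:\alpha<\kappa\}$. For each $x$ fix a descending sequence $W^x_0\supseteq W^x_1\supseteq\cdots$ of compact open neighborhoods of $x$ forming a local base. Let $\mc V$ be the family of finite non-empty unions of these $W^x_n$; one has $|\mc V|\le\kappa$, and $\mc V$ is cofinal in $\mc U$ under $\subs$ (by the same finite-subcover argument). By a transfinite bookkeeping construction I enumerate $\mc V=\{V_\beta:\beta<\mu\}$ with $\mu\le\kappa$ so as to satisfy two predecessor conditions: (i) whenever $V_\beta=W^{x_1}_{n_1}\cup\cdots\cup W^{x_k}_{n_k}$ with $k\ge 2$, each atomic summand $W^{x_i}_{n_i}$ already appears as some $V_\gamma$ with $\gamma<\beta$; (ii) whenever $V_\beta=W^x_n$ is atomic, the countable local cover $\{W^y_{k(y)}:y\in W^x_n\}$, with $k(y)$ minimal subject to $W^y_{k(y)}\subs W^x_n$, is entirely enumerated among $\{V_\gamma:\gamma<\beta\}$. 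Each atomic entry demands only countably many predecessor entries, and there are $\kappa$ atomic entries in all, so a standard interleaving fits the schedule into $\mu\le\kappa$.

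Setting $A_\beta=V_\beta\cap Y$ and discarding indices $\beta$ with $|A_\beta|<\omega$, verification of (N2) becomes mechanical. In case (i) let $I_\beta$ be the finite set of atomic-summand indices; then $V_\beta=\bigcup_{\gamma\in I_\beta}V_\gamma$, so any $V_{\alpha'}\cap V_\beta\subs V_\beta$ is already covered by $\{V_\gamma:\gamma\in I_\beta\}$ outright. In case (ii) let $I_\beta$ be the countable set of local-cover indices; then $V_{\alpha'}\cap V_\beta$ is a compact subset of $V_\beta=W^x_n$, and so is covered by finitely many members of the local cover. Intersecting either cover with $Y$ yields the required containment $A_{\alpha'}\cap A_\beta\subs\bigcup_{\zeta\in J}A_\zeta$ for some finite $J\subs I_\beta$. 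The principal obstacle is the combined bookkeeping that simultaneously realizes (i) and (ii); once the enumeration has been arranged, the rest of the argument is routine.
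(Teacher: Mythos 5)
Your treatment of (N1) is fine and coincides with the paper's, but the bookkeeping scheme you propose for (N2) cannot be realized by any ordinal-indexed enumeration, so the construction breaks at exactly the step you flag as ``the principal obstacle.'' First, with $k(y)$ minimal subject to $W^y_{k(y)}\subs W^x_n$, taking $y=x$ forces $W^x_{k(x)}=W^x_n$ as a set (the sequence $W^x_0\supseteq W^x_1\supseteq\cdots$ is descending, so $W^x_k\subs W^x_n$ with $k\le n$ implies $W^x_k=W^x_n$); hence the local cover of $W^x_n$ contains $W^x_n$ itself, and condition (ii) demands that $V_\beta$ be listed strictly before itself. Second, even if you modify the cover to consist of proper subsets, the relation ``appears in the local cover of'' admits infinite strictly descending chains and so is not well-founded: a splendid space contains compact open copies of $\omega+1$, and if $W\cong\omega+1$ has limit point $p$, any cover of $W$ by compact open proper subsets must contain a neighborhood $W'$ of $p$ with $W'\subsetneq W$ and again $W'\cong\omega+1$; iterating gives $W\supsetneq W'\supsetneq W''\supsetneq\cdots$, each required to precede the previous one, i.e.\ an infinite descending sequence of indices. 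So the ``standard interleaving'' does not exist.

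The repair is far simpler than your scheme and requires no special enumeration at all; this is the paper's route. Every $U\in\mc U$ is countable (locally countable plus compact) and compact, hence homeomorphic to a countable successor ordinal, and therefore has only countably many compact open subsets. Since $U'\cap U$ is compact open in $U$ for each $U'\in\mc U$, the family $\{U'\cap U:U'\in\mc U\}$ is countable. Consequently, for an arbitrary enumeration $\{A_\alpha:\alpha<\mu\}$ of $\trace{\mc U}Y$ with $A_\alpha=U_\alpha\cap Y$, and for any $\beta$, the set $\{A_\alpha\cap A_\beta:\alpha<\beta\}$ is countable; choose for each of its members one index $\zeta<\beta$ realizing it and let $I_\beta$ be the countable set of chosen indices. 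Then for every $\alpha<\beta$ there is $\zeta_\alpha\in I_\beta$ with $A_\alpha\cap A_\beta=A_{\zeta_\alpha}\cap A_\beta\subs A_{\zeta_\alpha}$, which gives (N2) with $J_{\alpha,\beta}$ a singleton. In short: you do not need to cover $A_\alpha\cap A_\beta$ by structurally ``smaller'' pieces enumerated earlier; you only need countably many representatives below $\beta$ for the countably many possible traces on $A_\beta$, and these exist for free.
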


\begin{proof}
Let $A\in \br Y;{\omega};$. Then $\overline A$ is countable, so it is 
compact. Since a splendid space is zero-dimensional, $A$ can be covered by 
finitely many compact open set, and so $A$ can be covered by 
an element of $\mc U$. Thus $\trace{\mc U}Y$ is  cofinal in 
$\<\br Y;{\omega};,\subset\>$.  

To check (N\ref{n2}) observe that every   $U\in \mc U$ is a countable compact space, 
so it is homeomorphic to a countable successor ordinal.  Thus $U$
has only countably many compact open subsets.
Hence $\mc U\restriction U$ is countable which implies  (N\ref{n2}) in the 
following stronger form:
\begin{enumerate}[({N}1${}^+$)]\addtocounter{enumi}{1}
	\item \label{n2x}for each ${\beta}<{\mu}$ there is a set $I_{\beta}\in \br {\beta};{\omega};$
	such that 
	 for all ${\alpha}<{\beta}$ there is 
	 ${\zeta}_{\alpha}\in I_{\beta}$ such that 
	\begin{displaymath}
	A_{\alpha}\cap A_{\beta}
	=A_{{\zeta}_\alpha}\cap A_{\beta}.
	\end{displaymath}
\end{enumerate}
\end{proof}

\begin{remark}
By \cite[Corollary 2.2]{JShS}, {\it if   
$({\omega}_{\omega+1}, {\omega}_{\omega})\to({\omega}_1,{\omega})$ holds,
then the cardinality of a splendid space is less than ${\omega}_{\omega}$}.
So we need some new ideas if we want to construct arbitrarily large nice
families in ZFC.	
\end{remark}

\begin{theorem}\label{tm:countable-main}
	If  ${\lambda}$ is an infinite cardinal, and 
	\begin{enumerate}[(i)]
		\item ${\lambda}<{\omega}_{\omega}$, or 
		\item $2^{\omega}<{\lambda}$,  and
		${\mu}^{\omega}={\mu}^+$ and  $\Box_{\mu}$ hold  
		for each ${\mu}\le {\lambda}$ with ${\omega}=\cf({\mu})<{{\mu}}$. 
		\end{enumerate} 
then there is an ${\omega}$-homogeneous and
${\omega}$-intransitive permutation group on ${\lambda}$.	
\end{theorem}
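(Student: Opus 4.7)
The plan is to chain together the machinery already set up in this section: a nice family on $\lambda$ feeds into Theorem \ref{tm:nice2ord} to produce a system of $\omega$-type orderings that agree on finite partitions of pairwise intersections, and Theorem \ref{tm:ord2group} then converts such orderings into an $\omega$-homogeneous, $\omega$-intransitive permutation group on $\lambda$. Since an $\omega$-intransitive group is automatically not $\omega$-transitive, the whole task reduces, under each set of hypotheses, to exhibiting a nice family $\mc A \subseteq \br \lambda;\omega;$ on $\lambda$.

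In case (i), the hypothesis $\lambda < \omega_\omega$ is exactly clause (i) of the JNW theorem, which produces a splendid space $X$ of cardinality $\lambda$. Identifying $X$ with $\lambda$ as a set and applying Lemma \ref{lm:splendid2nice} with $Y = X$ shows that the family of compact open subsets of $X$ is nice on $\lambda$, and we are done. Case (ii) splits according to the cofinality of $\lambda$. If $\cf(\lambda) > \omega$, then our assumptions for $\mu \le \lambda$ with $\omega = \cf(\mu) < \mu$ contain the JNW hypothesis for $\kappa = \lambda$ (only $\mu < \lambda$ is needed there), so JNW again supplies a splendid space of size $\lambda$ and we conclude as before.

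The main obstacle is the remaining sub-case of (ii) where $\cf(\lambda) = \omega$, because JNW does not directly yield a splendid space of size $\lambda$ then. The workaround is to apply JNW to $\lambda^+$: we have $\cf(\lambda^+) > \omega$ and $2^\omega < \lambda < \lambda^+$ automatically, and every $\mu < \lambda^+$ with $\omega = \cf(\mu) < \mu$ satisfies $\mu \le \lambda$, so the hypothesis provides $\mu^\omega = \mu^+$ and $\Box_\mu$. Consequently JNW delivers a splendid space $X$ of size $\lambda^+$. Since Lemma \ref{lm:splendid2nice} works for \emph{any} subset $Y \subseteq X$, we choose $Y \subseteq X$ with $|Y| = \lambda$, identify $Y$ bijectively with $\lambda$, and transport $\trace{\mc U}{Y}$ to a nice family on $\lambda$. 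Feeding this nice family into Theorems \ref{tm:nice2ord} and \ref{tm:ord2group} completes the construction in all cases.
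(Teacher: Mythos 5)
Your proposal is correct and follows essentially the same route as the paper: in each case you invoke the Juhász--Nagy--Weiss theorem with $\kappa=\lambda$ (or $\kappa=\lambda^+$ when $\cf(\lambda)=\omega$), pass to a subset of size $\lambda$ via Lemma \ref{lm:splendid2nice} to get a nice family on $\lambda$, and then chain Theorems \ref{tm:nice2ord} and \ref{tm:ord2group}. Your write-up is in fact more explicit than the paper's about why the hypotheses of JNW are met for $\kappa=\lambda^+$ in the countable-cofinality sub-case, but the argument is the same.
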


\begin{proof}
Applying 	the Juhasz-Nagy-Weiss theorem for ${\kappa}={\lambda}$ if 
$\cf({\lambda})>{\omega}$,
and for ${\kappa}={\lambda}^+$ if ${{\lambda}>\cf(\lambda})={\omega}$,
we obtain a splendid space on ${\kappa}\ge {\lambda}$. So, by 
Lemma \ref{lm:splendid2nice}, we obtain a nice family on $\mc A$ on ${\lambda}$.

Thus, putting together Theorems \ref{tm:nice2ord} and \ref{tm:ord2group}
we obtained the desired permutation group on ${\lambda}$.
\end{proof}

\section{${\kappa}$-homogeneous but not ${\kappa}$-transitive
for ${\kappa}>{\omega}$}

Write  $\trace{\mc A}X=\{A\cap X:A\in \mc A\}$ and
$\itrace{\mc A}X=\{\bigcap \mc A'\cap X:\mc A'\in \br \mc A;<{\omega};\}$.

\begin{definition}
Let ${\kappa}<{\lambda}$ be cardinals.
We say that a cofinal family $\mc A\subs \br {\lambda};{\kappa};$ 
is {\em locally small} iff $|\trace {\mc A}A|\le {\kappa}$
for all $A\in \mc A$.
\end{definition}

\begin{definition}
If $X,Y$ are subsets of ordinals with the same order types, then let 
$\trans XY $ be the unique order preserving bijection between $X$ and $Y$.	
\end{definition}

\begin{definition}\label{df:term-eval}
	If $\mc F$ is a set of functions, 	an {\em $\fx{\mc F}{x}$-term   $t$}  
	is a  sequence $\<h_0,\dots, h_{n-1}\>$,
	where  $h_i=x$ or $h_i=x^{-1}$  or $h_i=f_i$ or $h_i={f_i}^{-1}$ for some $f_i\in \mc F$. 
	If $g$ is function we use 
	$t[g]$ to denote the function $h'_0\circ h'_1\circ \dots \circ h'_{n-1}$,
	where 
	\begin{displaymath}
	h'_i=\left\{\begin{array}{ll}
	f_i&\text{if $h_i=f_i$, }\\	
	f^{-1}_i&\text{if $h_i=f^{-1}_i$,}\\	
	g&\text{if $h_i=x$,}\\	
	g^{-1}&\text{if $h_i=x^{-1}$.}\\	
	\end{array}  \right .
	\end{displaymath}

If $\mc H$ is a set of $\fx{\mc F}{x}$-terms, then write
\begin{displaymath}
\mc H[g]=\{t[g]:t\in H\}.
\end{displaymath}

	We say that an $\fx{\mc F}{x}$-term $t$ is an {\em $\mc F$-term} iff 
	neither  $x$ nor $x^{-1}$
	are in the $t$. If $t$ is a $\mc F$-term, then the function $t[g]$ does not depends on $g$,
	so we will write $t[\ ]$ instead of $t[g]$ in that situation.
	
	We say that a term $t'$ is a {\em subterm} of a term $t=\<h_0,\dots, h_{n-1}\>$ iff   
	$t'=\<h_{i_0}, h_{i_1},\dots , h_{i_k}\>$,
	where $i_0<i_1<\dots <i_{k}<n$.
	
	The set of all $\fx{\mc F}{x}$-terms is denoted by $\term {\fx{\mc F}{x}}$.

	The set of all $\mc F$-terms is denoted by $\term {\mc F}$.
	
	\end{definition}

\begin{theorem}\label{tm:-main-uncountable-transitive-group}
Assume that $2^{\kappa}={\kappa}^+$ and 
there is a cofinal, locally small family $\mc A\subs  \br 	{\lambda};{\kappa};$.
Then there is a permutation group $G$ on ${\lambda}$ which is ${\kappa}$-homogeneous,
but not ${\kappa}$-transitive.	
\end{theorem}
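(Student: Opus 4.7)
The plan is a ${\mu}$-stage transfinite construction (for some ${\mu}$ large enough to list all the homogeneity requirements) producing a generating family $\mc F\subs\perm{\lambda}$ so that $G:=\gen{\mc F}$ is ${\kappa}$-homogeneous while a prescribed ``bad'' injective function $y^\ast$ remains $G$-large. First I would fix a distinguished $A^\ast\in\mc A$ and an injective $y^\ast\subs A^\ast\times A^\ast$ with $\dom(y^\ast),\ran(y^\ast)\in\br{A^\ast};{\kappa};$ and $|A^\ast\setm\dom(y^\ast)|=|A^\ast\setm\ran(y^\ast)|={\kappa}$, constructed by mimicking the binary-doubling flip of Theorem \ref{tm:ord2group}: fix a well-ordering of $A^\ast$ in type ${\kappa}$, partition it into blocks of rapidly growing size, and let $y^\ast$ reverse each block. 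The essential property of $y^\ast$ thereby guaranteed is that no finite union of ``block-wise monotone'' partial functions on $A^\ast$ can cover $y^\ast$ on a co-${\kappa}$ set; this mirrors exactly the escape argument at the end of the proof of Theorem \ref{tm:ord2group}.

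Next I would enumerate, using local smallness together with $2^{\kappa}={\kappa}^+$, all the ${\kappa}$-homogeneity tasks: triples $(A,X,Y)$ with $A\in\mc A$, $X,Y\in\br A;{\kappa};$, and $|A\setm X|=|A\setm Y|=|A|$; there are at most $|\mc A|\cdot 2^{\kappa}\le{\mu}$ of these. I would then build an increasing chain $\<\mc F_{\alpha}:{\alpha}<{\mu}\>$ of subsets of $\bigcup_{A\in\mc A}\{\extend f:f\in\perm A\}$: at stage ${\alpha}$, pick the pending task $(A_{\alpha},X_{\alpha},Y_{\alpha})$ and add $\extend{g_{\alpha}}$ for some $g_{\alpha}\in\perm{A_{\alpha}}$ with $g_{\alpha}[X_{\alpha}]=Y_{\alpha}$, chosen so as to preserve the invariant
\begin{displaymath}
\bigl|y^\ast\setm\bigcup\mc H\bigr|={\kappa}\quad\text{for every finite }\mc H\subs\gen{\mc F_{{\alpha}+1}}.
\end{displaymath}
Once the recursion ends, $G=\gen{\bigcup_{\alpha}\mc F_{\alpha}}$ is ${\kappa}$-homogeneous because every task was realised, and the invariant certifies that $y^\ast$ is $G$-large, so $G$ is ${\kappa}$-intransitive (hence not ${\kappa}$-transitive).

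The crux and the main obstacle is the stage-${\alpha}$ existence of $g_{\alpha}$. Every element of $\gen{\mc F_{{\alpha}+1}}$ involving $g_{\alpha}$ has the form $t[g_{\alpha}]$ for some $\fx{\mc F_{\alpha}}{x}$-term $t$ (Definition \ref{df:term-eval}), so the invariant translates into the requirement that for every finite set $T$ of $\fx{\mc F_{\alpha}}{x}$-terms, the family $\{t[g_{\alpha}]:t\in T\}$ does not cover $y^\ast$ on a co-${\kappa}$ set. For each fixed $T$, the set of ``bad'' $g\in\perm{A_{\alpha}}$ must be shown to have size $<2^{\kappa}$: induct on the subterm structure, peel the outer occurrences of fixed $\mc F_{\alpha}$-maps, reduce to the innermost blocks where $g^{\pm1}$ is used against the doubling-flip pattern of $y^\ast$, and invoke the escape property to conclude. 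Granted this smallness, the pool $\{g\in\perm{A_{\alpha}}:g[X_{\alpha}]=Y_{\alpha}\}$, of size $2^{\kappa}={\kappa}^+$, supports a diagonal choice avoiding the union of ``bad'' sets across all finite $T$. The hypothesis $2^{\kappa}={\kappa}^+$ is consumed precisely in this free-extension count, and the delicate combinatorial verification of the smallness of each ``bad'' set is the step I expect to be by far the hardest.
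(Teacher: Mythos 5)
Your proposal follows the right high-level template (build generators one at a time while keeping a fixed injection $y^*$ large), but it has a genuine gap at exactly the step you flag as the crux, and it misses the structural idea that makes the paper's proof work. First, the claimed smallness of the ``bad'' sets is false already for the simplest term $t=\<x\>$: there ``bad'' means $|y^*\setm g|<\kappa$, and \emph{every} permutation of $A_{\alpha}$ extending $y^*$ is bad; since $|A_{\alpha}\setm\dom(y^*)|={\kappa}$ there are $2^{\kappa}$ such extensions, so the bad set has full size $2^{\kappa}$ and no diagonal choice over a pool of size $2^{\kappa}={\kappa}^+$ can avoid it. The paper does not count bad permutations at all: each new generator is produced by a ${\kappa}$-length back-and-forth (Lemma \ref{lm:induction-step}, fed by Lemma \ref{lm:induction-step-step}) in which each finite set of $\fx{\mc F}{x}$-terms is an explicit task; for that enumeration to have order type ${\kappa}$ one needs $|\mc F|\le{\kappa}$.

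This points to the second, more structural problem. Your recursion has length ${\mu}\ge|\mc A|\ge{\lambda}$, which in general exceeds ${\kappa}^+$; once $|\mc F_{\alpha}|>{\kappa}$ there are more than ${\kappa}$ finite term-sets over $\mc F_{\alpha}$, and neither a counting argument nor a ${\kappa}$-length construction inside $\perm{A_{\alpha}}$ can secure the invariant for the new generator against all of them. The paper's key move, absent from your plan, is to construct only ${\kappa}^+$ generators freely: using $2^{\kappa}={\kappa}^+$ together with local smallness it chooses $\mc D'$ of size ${\kappa}^+$ catching every isomorphism type of the structures $\model{\<A,B\>}=\<A,<,B,\trace{\mc A}{A}\>$, runs the Key Lemma \ref{lm:keylemma} only over $\mc D'$ (so each stage faces at most ${\kappa}$ earlier functions), and defines all remaining generators by conjugation $g_{\<A,B\>}=\trans{A'}{A}\circ f_{\<A',B'\>}\circ\trans{A}{A'}$. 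Intransitivity then needs an argument your proposal does not anticipate: an arbitrary word in the conjugated generators, intersected with ${\kappa}\times{\kappa}$, must telescope into a term over the small family $\mc F\cup\mc S$, where $\mc S$ consists of order-transfer maps between members of the traces $\itrace{\mc A}{A}$ --- and keeping $\mc S$ of size ${\kappa}$ is precisely where local smallness is consumed. (A minor further point: your block-reversal $y^*$ and its ``no finite union of monotone maps'' property is the mechanism of the countable case, Theorem \ref{tm:ord2group}; in the uncountable case the paper takes $y$ to be any fixed-point-free permutation of ${\kappa}$, and largeness is an invariant maintained by the construction rather than an order-theoretic property of $y$.)
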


Before proving this theorem we need some preparation.

\begin{lemma}\label{lm:induction-step-step}
	Assume that 
\begin{enumerate}[(1)]
\item  ${\lambda}$ is a cardinal, $\mc H$ is a finite set of  $\fx{S({\lambda})}{x} $-terms, and 
$\mc H$ is closed for  subterms,
\item $g$ is an injective function, 
$\dom(g)\cup\ran(g)\subs {\lambda}$, 
\item \label{ayaundef} ${\alpha},{\alpha}^*\in {\lambda}$ such that 
\begin{displaymath}
\<{\alpha},{\alpha}^*\>\notin \bigcup \mc H[g],  
\end{displaymath}
\item ${\zeta}_0\in {\lambda}\setminus \dom(g)$ and ${\zeta}_1\in 
{\lambda}\setminus \ran(g)$,
\item \label{etagengeric} ${\eta}_0\in {\lambda}\setminus \ran(g)$ 
and ${\eta}_1\in {\lambda}\setminus \dom(g)$
such that 
\begin{displaymath}
{\eta}_0,{\eta}_1\notin\{t[g]({\alpha}), t[g]^{-1}({\alpha}^*) :t\in \mc H\}.
\end{displaymath}
\end{enumerate}	
Let $g_0=g\cup\{\<{\zeta}_0,{\eta}_0\>\}$ and 
$g_1=g\cup\{\<{\eta}_1,{\zeta}_1\>\}$.
Then  
\begin{displaymath}
\<{\alpha},{\alpha}^*\>\notin \mc H[g_0]\cup \mc H[g_1].  
\end{displaymath}
\end{lemma}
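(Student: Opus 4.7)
I would argue by contradiction: fix $k \in \{0,1\}$ and a term $t = \<h_0,\dots,h_{n-1}\> \in \mc H$ with $t[g_k]({\alpha}) = {\alpha}^*$, and trace the computation ${\beta}_n = {\alpha}$, ${\beta}_i = h'_i({\beta}_{i+1})$ under $g_k$, so that ${\beta}_0 = {\alpha}^*$. Let $J \subseteq \{0,\dots,n-1\}$ collect the indices at which the new pair of $g_k$ is genuinely invoked (i.e.\ $h_i \in \{x,x^{-1}\}$ and $h'_i({\beta}_{i+1})$ differs from what $g^{\pm 1}$ would have given). If $J = \emptyset$, then $t[g_k]({\alpha}) = t[g]({\alpha}) \ne {\alpha}^*$ by hypothesis~(3); so $J \ne \emptyset$, and we set $i^+ = \max J$ and $i^- = \min J$.

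Because no step of index $> i^+$ invokes the new pair, the prefix $s = \<h_{i^++1},\dots,h_{n-1}\>$---a subterm of $t$, hence in $\mc H$---satisfies $s[g]({\alpha}) = {\beta}_{i^++1}$. Dually, the suffix $u = \<h_0,\dots,h_{i^--1}\> \in \mc H$ satisfies $u[g]({\beta}_{i^-}) = {\alpha}^*$. For each $k$, the new pair of $g_k$ connects a ``free'' coordinate (${\zeta}_k$) with a ``generic'' coordinate (${\eta}_k$, the one subject to the avoidance in~(5)), and at every index in $J$ the two-element set $\{{\beta}_i,{\beta}_{i+1}\}$ equals $\{{\zeta}_k,{\eta}_k\}$.

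A routine case split on $h_{i^+}$ and $h_{i^-}$ in $\{x,x^{-1}\}$ now yields the easy contradictions: whenever ${\beta}_{i^++1} = {\eta}_k$ we get $s[g]({\alpha}) = {\eta}_k$, contradicting the first clause of~(5); whenever ${\beta}_{i^-} = {\eta}_k$ we get $u[g]^{-1}({\alpha}^*) = {\eta}_k$, contradicting the second. Only one configuration survives---the one in which the new pair at $i^+$ moves from ${\zeta}_k$ into the trace and at $i^-$ moves out of the trace back through ${\zeta}_k$---and it forces ${\beta}_{i^++1} = {\zeta}_k = {\beta}_{i^-}$, hence $s[g]({\alpha}) = {\zeta}_k$ and $u[g]({\zeta}_k) = {\alpha}^*$. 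The crucial move is then to excise the block $h_{i^-},h_{i^-+1},\dots,h_{i^+}$ from $t$: the resulting sequence $\<h_0,\dots,h_{i^--1},h_{i^++1},\dots,h_{n-1}\>$ is still a subterm of $t$ (per Definition~\ref{df:term-eval}), so it lies in $\mc H$, and evaluated at $g$ it maps ${\alpha}$ first via $s[g]$ to ${\zeta}_k$ and then via $u[g]$ to ${\alpha}^*$, contradicting hypothesis~(3).

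The main obstacle I expect is the bookkeeping around the case split and the correct choice of witnesses. Taking $i^+,i^-$ as the extremes of $J$ is essential: it guarantees both that $s$ and $u$ behave identically under $g$ and under $g_k$ (so the hypotheses on $g$ apply), and that, in the surviving configuration, the block between them can be cleanly removed without disturbing what happens at the two ends. The concatenation trick then collapses all intermediate uses of the new pair into a single passage through ${\zeta}_k$, descending a forbidden pair from $\mc H[g_k]$ to a forbidden pair in $\mc H[g]$.
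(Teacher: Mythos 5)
Your proposal is correct and follows essentially the same argument as the paper: trace the computation, take the extremal indices where the new pair is invoked, use hypothesis (5) together with closure under subterms to rule out the $\eta_k$-cases at both ends, and excise the block in between. The only (cosmetic) difference is that the paper runs this as a minimal-length-counterexample argument, producing a shorter term that still works under $g_k$, whereas you contradict hypothesis (3) directly by observing that the excised term already sends $\alpha$ to $\alpha^*$ under $g$ itself.
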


\begin{proof}
We prove only $\<{\alpha},{\alpha}^* \>\notin \mc H[g_0]$.  
The proof of the other statement is similar. 

Assume on the contrary that $\<{\alpha},{\alpha}^* \>\in \mc H[g_0]$.

Pick  the shortest term $t=\<f_0,\dots, f_n\>$  from $\mc H$ such that
$t[g_0]({\alpha})={\alpha}^*$.

Write ${\alpha}_{n+1}={\alpha}$ and 
${\alpha}_{i}=\<f_{i},\dots ,f_n\>[g_0]({\alpha})$
for $0\le i\le n$.  Hence ${\alpha}_{0}={\alpha}^* $.

Let $i$ maximal such that ${\alpha}_i$ is $	{\zeta}_0$ or ${\eta}_0$.
Since $t[g]({\alpha})$ can not be ${\alpha}^* $ by (\ref{ayaundef}), $i$ is defined.

Since ${\alpha}_i=\<f_i,\dots ,f_{n}\>[g]({\alpha})$, it follows that 
${\alpha}_i\ne {\eta}_0$ by (\ref{etagengeric}). So ${\alpha}_i={\zeta}_0$.

Let $j$ minimal such that ${\alpha}_j$ is $	{\zeta}_0$ or ${\eta}_0$.
Since ${\alpha}_j=(\<f_0,\dots ,f_{j-1}\>[g])^{-1}({\alpha}^* )$, 
it follows that 
${\alpha}_j\ne {\eta}_0$  by (\ref{etagengeric}). So ${\alpha}_j={\zeta}_0$  by (\ref{etagengeric}).
Thus ${\alpha}_i={\alpha}_j={\zeta}_0$,  and so 
\begin{displaymath}
{\alpha}^* =\<f_0,\dots, f_{j-1},f_i,\dots, f_n\>[g_0]({\alpha}).
\end{displaymath}
Since $j< i$, the term $t'=\<f_0,\dots, f_{j-1},f_i,\dots, f_n\>$ is shorter 
than $t$ and still ${\alpha}^* =t'[g_0]({\alpha})$.
So the length of $t$ was not minimal. Contradiction.
\end{proof}

\begin{lemma}\label{lm:induction-step}
	Assume that 
	\begin{enumerate}[(1)]
	\item $y\in \perm {\kappa}$,
	\item $A\in \br {\lambda};{\kappa};$,
	and $B,C\in \br A;{\kappa};$ such that 
	$|A\setm B|=|A\setm C|={\kappa}$,
	\item $\mc F\in \br {\perm {\lambda}};{\kappa};$ such that  
	\begin{displaymath}
	 	|y\setminus \bigcup \mc H[\ ]|={\kappa}
	 	\end{displaymath}
    whenever $\mc H$ is a finite set of $\mc F$-terms. 
	\end{enumerate}
	Then there is $g\in \perm A$ such that 
	\begin{enumerate}[(i)]
	\item 	$g[B]=C$,
	\item 		
	\begin{displaymath}
		|y\setminus \mc H[\extend{g}]|={\kappa}
		\end{displaymath}	
whenever 	$\mc H$ is a finite set of $\fx{\mc F}{x}$-terms.
	\end{enumerate}
	\end{lemma}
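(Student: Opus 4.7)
The plan is to build $g$ as the union of an increasing chain of partial injections $g_\xi$ ($\xi<\kappa$) with $\dom g_\xi,\ran g_\xi\subs A$, $|g_\xi|<\kappa$, and respecting the partition $g_\xi[B\cap\dom g_\xi]\subs C$ and $g_\xi[(A\setm B)\cap\dom g_\xi]\subs A\setm C$. Writing $\bar g_\xi=g_\xi\cup\operatorname{id}\restriction(\lambda\setm A)$, so that the final $\bar g$ is precisely $\extend g$, I will simultaneously choose a sequence of ``promises'' $(\<\tilde\alpha_\xi,\tilde\alpha_\xi^*\>,\mc T_\xi)$ in which $\<\tilde\alpha_\xi,\tilde\alpha_\xi^*\>\in y$ and $\mc T_\xi$ is a finite subterm-closed set of $\fx{\mc F}{x}$-terms, maintaining the invariant
\[
\<\tilde\alpha_i,\tilde\alpha_i^*\>\notin\bigcup\mc T_i[\bar g_\xi]\qquad(i<\xi).
\]

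Bookkeeping enumerates $B,C,A\setm B,A\setm C$ each in order type $\kappa$, and enumerates the at most $\kappa$-many finite subterm-closed sets of $\fx{\mc F}{x}$-terms as $\{\mc T_\xi:\xi<\kappa\}$ with each such set appearing $\kappa$-many times. At stage $\xi$ I first pick a new $\<\tilde\alpha_\xi,\tilde\alpha_\xi^*\>\in y\setm\bigcup\mc T_\xi[\bar g_\xi]$. On any trajectory of $t\in\mc T_\xi$ whose intermediate values avoid $S_\xi:=\dom g_\xi\cup\ran g_\xi$, each $\bar g_\xi$-step either lies in $\lambda\setm A$ and acts as identity (so $t[\bar g_\xi](\gamma)=t^*[\,](\gamma)$ for the $\mc F$-term $t^*$ obtained from $t$ by deleting $x,x^{-1}$) or lies in $A\setm S_\xi$ and makes the trajectory undefined, while the set of $\gamma$ whose trajectory meets $S_\xi$ has size $\le(\text{length of }t)\cdot|S_\xi|<\kappa$. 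Hence the hypothesis applied to $\mc T_\xi^*=\{t^*:t\in\mc T_\xi\}$ gives $|y\setm\bigcup\mc T_\xi[\bar g_\xi]|=\kappa$ and a fresh pair can be chosen. Next I extend $g_\xi$ by up to four atomic pairs to absorb the next $b_\xi\in B$, $c_\xi\in C$, $a_\xi\in A\setm B$, $d_\xi\in A\setm C$ into $\dom g$ and $\ran g$. In each atomic extension, Lemma~\ref{lm:induction-step-step} applied to each prior promise $i\le\xi$ rules out at most $2|\mc T_i|$ values of the free coordinate $\eta$; the total forbidden set has cardinality $\le 2\sum_{i\le\xi}|\mc T_i|<\kappa$, whereas the candidate pool (the relevant piece of $A\setm\dom g_\xi$ or $A\setm\ran g_\xi$) has size $\kappa$, so a valid $\eta$ exists and all $\xi{+}1$ promises are simultaneously preserved.

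At a limit $\xi$, taking $g_\xi=\bigcup_{\eta<\xi}g_\eta$ preserves the invariant: any realization $t[\bar g_\xi](\tilde\alpha_i)=\tilde\alpha_i^*$ with $t\in\mc T_i$ uses only finitely many pairs of $\bar g_\xi$, all of which already lie in some $\bar g_\eta$ with $\eta<\xi$, contradicting the stage-$\eta$ invariant. At the end, $g=\bigcup g_\xi\in S(A)$ satisfies $g[B]=C$, and given any finite $\mc H\subs\term{\fx{\mc F}{x}}$, its subterm closure $\mc H^+$ coincides with $\mc T_\xi$ for $\kappa$-many $\xi$, each contributing a distinct promised pair in $y\setm\bigcup\mc H^+[\extend g]\subs y\setm\bigcup\mc H[\extend g]$, so $|y\setm\bigcup\mc H[\extend g]|=\kappa$, as required. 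The main obstacle is the joint coordination between the bookkeeping that forces $g$ to be a bijection $A\to A$ with $g[B]=C$ and the mounting stack of promises; this coordination succeeds precisely because Lemma~\ref{lm:induction-step-step} charges each promise only a finite forbidden set per atomic extension, keeping the total cost safely below $\kappa$ at every stage.
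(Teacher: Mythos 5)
Your proof is correct, and its overall skeleton (a length-$\kappa$ transfinite back-and-forth with bookkeeping that makes each finite subterm-closed set of $\fx{\mc F}{x}$-terms recur $\kappa$ many times, the density claim that $|y\setminus\bigcup\mc T[\,g_\xi\cup\operatorname{id}_{\lambda\setminus A}]|=\kappa$ at every stage, and the use of Lemma \ref{lm:induction-step-step} for atomic one-pair extensions) matches the paper's. Where you genuinely diverge is in how a chosen witness pair is kept out of $\bigcup\mc T[\extend g]$ for the rest of the construction. The paper, at the stage devoted to $\mc H_j$, picks $\alpha_{j+1}$ and then immediately extends $g$ finitely many times so that $t[g\cup\operatorname{id}_{\lambda\setminus A}](\alpha_{j+1})$ becomes \emph{defined} (and $\ne y(\alpha_{j+1})$) for every $t\in\mc H_j$; once defined, these values are frozen under all further extensions, so the witness is sealed and the back-and-forth steps that force $\dom(g)=\ran(g)=A$ and $g[B]=C$ need pay no attention to earlier commitments. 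You instead keep each witness as a live negative promise and re-protect all of them at every atomic extension, invoking Lemma \ref{lm:induction-step-step} to show that each promise forbids only finitely many choices of the free coordinate, so the total forbidden set stays below $\kappa$; you then need the extra (correct) observations that promises survive limit stages and the final union because any realization of a term uses only finitely many pairs of $g$. Both mechanisms work: the paper's sealing trick makes each witness a one-time cost and keeps the $\mathbb{TASK}_0$ steps entirely free, while your preservation invariant is the more standard forcing-style argument and avoids having to arrange definedness of the terms at the witness, at the price of a cardinality check at every single extension.
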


	\begin{proof}[Proof of Lemma \ref{lm:induction-step}]
		Write
		\begin{displaymath}
			\mbb {TASK}_0= A\times \{\dom,\ran\}\ \text{and}\  
			\mbb {TASK}_1=\br \term{\fx{\mc F}{x}};<{\omega};\times {\kappa}.
			\end{displaymath}

Let $\{I_0,I_1\}\in \br {\br {\kappa};{\kappa};};2;$ be a partition of ${\kappa}$,
and fix  enumerations $\{T_i:i\in I_0\}$ of $\mbb{TASK}_0$, 
and $\{T_i:i\in I_1\}$ of  $\mbb{TASK}_1$.

By transfinite induction, for  $i<{\kappa}$
we will construct a function $g_i$ 
and if $i=j+1$ for some $j\in K_1$ then we also pick an ordinal ${\alpha}_{j+1}\in {\kappa}$ for 
such that 
\begin{enumerate}[(a)]
\item $g_i$ is an injective function, $\dom(g_i)\cup \ran(g_i)\subs A$,
\item $g_i[B]\subs C$ and  $g_i[A\setminus B]\subs A\setminus C$;
\item $|g_i|\le i$;
\item if $i=j+1$, $j\in I_0$ and $T_j=\<{\zeta},\dom\>$, then 
${\zeta}\in \dom(g_i)$;
\item if $i=j+1$, $j\in I_0$ and $T_j=\<{\zeta},\ran\>$, then 
${\zeta}\in \ran(g_i)$;

\item \label{y-Hgiplus} if $i=j+1$, $j\in I_1$  and $T_j=\<\mc H_j,\chi_j\>$,
then 
\begin{enumerate}[(i)]
\item ${\alpha}_{j+1}\in {\kappa}\setm\{{\alpha}_{j'+1}:j'\in I_1\cap j\}$, and
\item $t[{g_i}\cup \operatorname{id}_{{\lambda}\setm A}]({\alpha}_{j+1})$ is defined
and  $t[{g_i}\cup \operatorname{id}_{{\lambda}\setm A}]({\alpha}_{j+1})\ne y({\alpha}_{j+1})$
for each $t\in \mc H_{j}$.
 \end{enumerate}
 
\end{enumerate} 

\medskip

Let $g_0=\empt$.

\medskip

If $i$ is limit, then let $g_i=\bigcup_{j<i}g_j$.

\medskip

Assume that $i=j+1$.

\begin{claim}\label{cl:yHgj}
	\begin{equation}\tag{\dag}\label{dag}
		|y\setm \bigcup\mc H[g_j\cup \operatorname{id}_{{\lambda}\setminus A	}]|={\kappa}.
		\end{equation}
		for each finite set $\mc H$ of $\fx{\mc F}{x}$-terms.
			\end{claim}

\begin{proof}[Proof of the Claim]
Fix $\mc H$. We can assume that $\mc H$ is closed for subterms.
	By (3) we have 
	$|y\setminus \bigcup \mc H[\ ]|={\kappa}$, and 
	 \begin{displaymath}\tag{$\circ$}
		y\cap \bigcup \mc H[\ ]=y\cap \bigcup 
		\mc H[\operatorname{id}_{{\lambda}\setminus A}]	 
	 \end{displaymath}
	 because $\mc H$ is closed for subterms.
	Since $|g_j|<{\kappa}$, we have 
\begin{displaymath}\tag{$\bullet$}
	|t[\operatorname{g_j\cup id}_{{\lambda}\setminus A}]\setminus
	t[\operatorname{id}_{{\lambda}\setminus A}]|<{\kappa}.
\end{displaymath}
for each $t\in \mc H$.
Putting together  $|y\setminus \bigcup \mc H[\ ]|={\kappa}$, $(\circ)$ and $(\bullet)$ we obtain (\ref{dag}).
	\end{proof}

\noindent {\bf Case 1.} {\it $j\in I_0$ and so 
$T_j=\<{\zeta}_j,x_j\>\in A\times\{\dom,\ran\}$.}

Assume  first that $x_j=\dom$.  If ${\zeta}_j\in \dom (g_j)$, let $g_i=g_j$.
If ${\zeta}_j\notin \dom (g_j)$, then 
pick ${\eta}\in C$ if ${\zeta}_i\in B$, and 
pick ${\eta}\in A\setminus C$ if ${\zeta}_i\in A\setminus B$
such that 
and ${\eta}\notin \ran(g_j)$.

Let $g_i=g_j\cup\<{\zeta}_i,{\eta}\>$.
Then
$g_i$ satisfies 
(a)--(f).

The case $x_j=\ran$ is similar.

\medskip

\noindent {\bf Case 2.} {\it $j\in I_1$ and so 
$T_j=\<\mc H_j,\chi_j\>\in \br \term{\fx{\mc F}{x}};<{\omega};\times {\kappa}.$}

We can assume that $\mc H_j$ is closed for subterms.

By Claim \ref{cl:yHgj}, we have 
\begin{displaymath}
|y\setm \bigcup \mc H_j[g_j\cup id_{({\lambda}\setminus A)} ]|={\kappa}.
\end{displaymath}

So we can pick ${\alpha}_{j+1}\in {\kappa}\setm\{{\alpha}_{j'+1}:j'\in I_1\cap j\}$
such that 
 \begin{enumerate}[(i)]
	\item[$(*)$] for each $t\in \mc H_{j}$ either  $t[{g_j}\cup \operatorname{id}_{{\lambda}\setm A}]({\alpha}_{j+1})$ 
	is undefined
 or	$t[{g_j}\cup \operatorname{id}_{{\lambda}\setm A}]({\alpha}_{j+1})\ne y({\alpha}_{j+1})$
	.
	 \end{enumerate}
	
Now in finitely many steps, using Lemma \ref{lm:induction-step-step},
we can extend the function $g_j$ to a function $g_i$ such that 
\begin{enumerate}[(i)]
	\item[$(*)$] $t[{g_i}\cup \operatorname{id}_{{\lambda}\setm A}]({\alpha}_{j+1})$ 
	is defined and 
 $t[{g_i}\cup \operatorname{id}_{{\lambda}\setm A}]({\alpha}_{j+1})\ne y({\alpha}_{j+1})$
	for each $t\in \mc H_{j}$.
	 \end{enumerate}

Indeed, if $t[{g'}\cup \operatorname{id}_{{\lambda}\setm A}]({\alpha}_{j+1})$ 
is not defined, where $t=\<t_0,\dots, t_n\>$ then there is $i<n$
such that  
either 
\begin{enumerate}[(1)]
\item[] ${\zeta}_i=\<t_{i+1},\dots, t_n\>[{g'}\cup \operatorname{id}_{{\lambda}\setm A}]({\alpha}_{j+1})$
is defined, 
$t_{i}=x$ and ${\zeta}_i\in A\setm  \dom(g')$ 
\end{enumerate}
or  
\begin{enumerate}[(1)]
	\item[] ${\zeta}_i=\<t_{i+1},\dots, t_n\>[{g'}\cup \operatorname{id}_{{\lambda}\setm A}]({\alpha}_{j+1})$
	is defined, $t_{i}=x^{-1}$ and ${\zeta}_i\in A\setm \ran(g')$. 
	\end{enumerate}
In both cases, using Lemma \ref{lm:induction-step-step}, 
we can extend $g'$ to $g''$ such that
$\<t_i,\dots, t_n\>[{g''}\cup \operatorname{id}_{{\lambda}\setm A}]({\alpha}_{j+1})$
is defined and $\<{\alpha}_{j+1},y({\alpha}_{j+1})\>\notin \bigcup\mc H_j[g''\cup id_{{\lambda}\setm A}]$.

\medskip

After the inductive construction,
the function $g=\bigcup_{i<{\kappa}}g_i$ meets the requirements.
	\end{proof}

	\begin{lemma}\label{lm:coftopairs}
		Assume that $2^{\kappa}={\kappa}^+$ and there is a cofinal, locally small subfamily $\mc C\subs \br {\lambda};{\kappa};$.
		Then there is a family $\mc D\subs \br {\lambda};{\kappa};\times 
		\br 	{\lambda};	{\kappa}; $  such that 
		\begin{enumerate}[(1)]
		\item if $\<A,B\>\in \mc D$, then $B\cup {\kappa}\subs A$ and 
		$|A\setminus B|={\kappa}$.
		\end{enumerate}
		Moreover,
		writing $\mc A=\{A:\<A,B\>\in \mc D\}$ and $\mc B=\{B: \<A,B\>\in \mc D\}$
		\begin{enumerate}[(1)]\addtocounter{enumi}{1}
		\item  $\mc A$ is a cofinal, locally small subfamily of $\br {\lambda};{\kappa};$,
		\item $\mc B$ is cofinal in $\<\br {\lambda};{\kappa};,\subset\>$,
		\item $\{X\subs {\kappa}: |X|=|{\kappa}\setminus X|={\kappa}\}\subs  \mc B$.
		\end{enumerate}
	\end{lemma}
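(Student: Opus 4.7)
The plan is to absorb the initial segment $\kappa$ into every member of $\mc C$ and then declare every admissibly split subset of a resulting set to be a $B$-projection paired with that set. Concretely, first set
\[
\mc A_0 = \{\kappa\} \cup \{C \cup \kappa : C \in \mc C\},
\]
and then define
\[
\mc D = \{\<A,B\> : A \in \mc A_0,\ B \subs A,\ |B| = |A \setm B| = \kappa\}.
\]
By construction, whenever $\<A,B\> \in \mc D$ we have $\kappa \subs A$ and $B \subs A$, hence $B \cup \kappa \subs A$, and $|A \setm B| = \kappa$; this is (1).

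The main technical point is showing that $\mc A_0$ remains cofinal and locally small. Cofinality is immediate since $\mc C$ is cofinal and $|C \cup \kappa| = \kappa$. For local smallness I would use the identity
\[
(C \cup \kappa) \cap (C^* \cup \kappa) = (C \cap C^*) \cup \kappa,
\]
so $\trace{\mc A_0}{A}$ for $A = C^* \cup \kappa$ is in bijection with a subset of $\trace{\mc C}{C^*}$, giving $|\trace{\mc A_0}{A}| \le \kappa$. For the extra element $A = \kappa$, use cofinality of $\mc C$ to fix some $C^* \in \mc C$ with $\kappa \subs C^*$; then $C \cap \kappa = (C \cap C^*) \cap \kappa$ for all $C \in \mc C$, so again $|\trace{\mc A_0}{\kappa}| \le |\trace{\mc C}{C^*}| + 1 \le \kappa$. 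Since every $A \in \mc A_0$ of size $\kappa$ can be split in half, the first projection $\mc A$ of $\mc D$ equals $\mc A_0$, which establishes (2).

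Properties (3) and (4) are then essentially free. For (3), given $Y \in \br \lambda; \kappa;$, I would use $\kappa < \lambda$ to choose $Z \subs \lambda \setm Y$ of size $\kappa$, pick $A \in \mc A_0$ with $Y \cup Z \subs A$ (cofinality), and observe $|A \setm Y| \ge |Z| = \kappa$ while $|A \setm Y| \le |A| = \kappa$; hence $\<A,Y\> \in \mc D$ and $Y \in \mc B$. For (4), any $X \subs \kappa$ with $|X| = |\kappa \setm X| = \kappa$ satisfies $\<\kappa, X\> \in \mc D$ directly.

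The sole obstacle, such as it is, is the local smallness verification for $\mc A_0$, which is routine but requires the "choose a witness to cofinality at $\kappa$" trick above. I should flag that I do not see a use of $2^\kappa = \kappa^+$ anywhere in this lemma; I would expect that hypothesis is consumed later in Theorem~\ref{tm:-main-uncountable-transitive-group} when $\mc D$ is iterated against the set of $\fx{\mc F}{x}$-terms, rather than in the construction of $\mc D$ itself.
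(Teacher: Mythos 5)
Your proof is correct, but it takes a genuinely different route from the paper's. The paper keeps the first coordinates inside the given family: it fixes $\mc C$ so that $|\{C\in\mc C: D\subs C\}|=|\mc C|$ for every $D\in\br {\lambda};{\kappa};$, takes $\mc B=\mc C\cup\{X\subs{\kappa}:|X|=|{\kappa}\setminus X|={\kappa}\}$, and by a transfinite bookkeeping (quoting $2^{\kappa}={\kappa}^+\le|\mc C|$ for the counting) pairs each $B\in\mc B$ with some $A\in\mc C$ satisfying $B\cup{\kappa}\subs A$ and $|A\setminus B|={\kappa}$; local smallness of $\mc A$ then comes for free, since $\mc A\subs\mc C$ and traces only shrink when passing to a subfamily. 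You instead modify the family, replacing each $C$ by $C\cup{\kappa}$ and adjoining ${\kappa}$ itself, and then take all admissible pairs; this obliges you to re-verify local smallness, which your identity $(C\cup{\kappa})\cap(C^*\cup{\kappa})=(C\cap C^*)\cup{\kappa}$ handles cleanly (for $A={\kappa}$ the trace is just $\{{\kappa}\}$, since every member of your $\mc A_0$ contains ${\kappa}$, so the auxiliary $C^*$ there is not even needed). What your approach buys is a construction with no induction and, as you correctly flag, no use of $2^{\kappa}={\kappa}^+$: that hypothesis is genuinely consumed only in the proof of Theorem \ref{tm:-main-uncountable-transitive-group}, where one extracts $\mc D'$ of size ${\kappa}^+$ containing a pair $\<A,X\>$ for each of the $2^{\kappa}$ sets $X\subs{\kappa}$ and bounds the number of isomorphism types of the structures $\model{\<A,B\>}$ by $2^{\kappa}$.
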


	\begin{proof}[Proof of Lemma \ref{lm:coftopairs}] 
	Fix a locally small, cofinal subfamily  $\mc C\subs \br {\lambda};{\kappa};$.
	 We can assume that 
	$|\{C\in \mc C:D\subs C\}|=|\mc C|$ for all 
	$D\in \br {\lambda};{\kappa};$.  
	
	Write ${\mu}=|\mc C|$. Then $2^{\kappa}={\kappa}^+\le {\mu}$.
	So we can construct $\mc D$ by induction such that 	$\mc A\subs \mc C$, ${\kappa}\subs \bigcap \mc A$  
	and $\mc B=\mc C\cup\{X\subs {\kappa}: |X|=|{\kappa}\setminus X|={\kappa}\}$. 
	\end{proof}

After that preparation we prove the main theorem of this section.

\begin{proof}[Proof of Theorem \ref{tm:-main-uncountable-transitive-group}]

Fix $\mc D$, $\mc A$ and $\mc B$ as in Lemma \ref{lm:coftopairs}.

For $\<A,B\>\in \mc D$ consider the structure  
$\model {\<A,B\>}=\<A,<,B,\{A\cap X:A\in \mc A\}\>$.

Fix $\mc D'\in \br \mc D;{\kappa}^+;$ such that 
writing $\mc A'=\{A':\<A',B'\>\in \mc D'\}$ and $\mc B'=\{B': \<A',B'\>\in \mc D'\}$
we have
\begin{enumerate}[(a)]
\item $\forall \<A,B\>\in \mc D$
$\exists \<A',B'\>\in \mc D'$ such that $\trans{A}{A'} $ is an isomorphism
between $\model {\<A,B\>}$ and  $\model {\<A',B'\>}$. 
\item $\{X\subs {\kappa}: |X|=|{\kappa}\setminus X|={\kappa}\}\subs  \mc B'$.
\end{enumerate}

Pick $K\in \br {\kappa};{\kappa};$ with $|{\kappa}\setm K|={\kappa} $.
Choose $y \in S({\kappa})$ such that 
$y({\alpha})\ne {\alpha}$
for each  ${\alpha}\in {\kappa}$.

\begin{lemma}[Key lemma]\label{lm:keylemma}
There are functions $\mc F=\{f_{\<A,B\>}:\<A,B\>\in \mc D'\}$
such that 
\begin{enumerate}[(a)]
\item $f_{\<A,B\>}\in \perm A$,
\item $f_{\<A,B\>}[B]=K$,
\end{enumerate}	
moreover, taking
\begin{align*}
	\mc S=\big\{\trans{C_0}{C_1}: 
\<A_0,B_0\>,\<A_1,B_1\>\in \mc D', C_0\in \itrace{\mc A}{A_0},
C_1\in \itrace{\mc A}{A_1},&\\ \trans{C_0}{C_1}[\trace{\mc A}{ C_0}]&= 
\trace{\mc A}{C_1}\},
\end{align*}
if $\mc H$ is a finite collection of  $\mc F\cup \mc S$-terms, then 
\begin{displaymath}
|y\setm \bigcup\mc H[\ ]|={\kappa}.
\end{displaymath}
\end{lemma}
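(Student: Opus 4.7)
The plan is to construct the family $\mc F$ by transfinite induction on $\kappa^+$, applying Lemma \ref{lm:induction-step} at each stage to produce one $f_{\<A,B\>}$ while preserving the largeness condition. Fix an enumeration $\mc D' = \{\<A_\alpha, B_\alpha\> : \alpha < \kappa^+\}$ and write $f_\alpha = f_{\<A_\alpha, B_\alpha\>}$. Set $\mc F_\alpha = \{\extend{f_\beta} : \beta < \alpha\} \cup \mc S$. The invariant I will maintain is: \emph{for every finite collection $\mc H$ of $\mc F_\alpha$-terms, $|y \setm \bigcup \mc H[\ ]| = \kappa$.} The base case reduces to showing that $\mc S$-terms alone cannot cover $y$; this is easy because every element of $\mc S$ is order-preserving while $y$ is everywhere-moving, so compositions of $\mc S$-maps fix cofinally many points of $\kappa$ and cannot cover more than a small portion of $y$.

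At successor stage $\alpha$, I would like to apply Lemma \ref{lm:induction-step} with $A = A_\alpha$, $B = B_\alpha$ and $C = K$ (noting $K \subs \kappa \subs A_\alpha$ gives $|A_\alpha \setm K| = \kappa$). The obstacle is that $|\mc F_\alpha|$ may exceed $\kappa$, whereas the lemma requires a family of size $\kappa$. The fix is to select $\mc F^*_\alpha \subs \mc F_\alpha$ of size $\kappa$ that is \emph{complete for interactions on $A_\alpha$}: by local smallness, $|\trace{\mc A}{A_\alpha}| \le \kappa$, so among the previously constructed functions there are at most $\kappa$ distinct traces $A_\beta \cap A_\alpha$; pick one representative $\extend{f_\beta}$ per trace-class, and include the $\le \kappa$ elements of $\mc S$ whose domain or range meets $A_\alpha$ in a previously unseen way. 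The hypothesis of Lemma \ref{lm:induction-step} holds for $\mc F^*_\alpha$ by the invariant, so the lemma yields $f_\alpha \in \perm{A_\alpha}$ with $f_\alpha[B_\alpha] = K$ and preservation of the largeness property for all finite collections of $\mc F^*_\alpha \cup \{\extend{f_\alpha}\}$-terms.

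To propagate the invariant to $\mc F_{\alpha+1}$, I would argue that any finite collection $\mc H$ of $\mc F_{\alpha+1}$-terms can be rewritten, without changing its action on $y \subs \kappa \times \kappa$, as a collection of $\mc F^*_\alpha \cup \{\extend{f_\alpha}\}$-terms. Each $\extend{f_\beta}$ appearing in a term of $\mc H$ that is not already in $\mc F^*_\alpha$ gets replaced by $\rho \circ \extend{f_{\beta'}} \circ \rho^{-1}$ where $\beta'$ is the chosen representative for $\beta$'s trace class and $\rho \in \mc S$ is the canonical order-isomorphism between the relevant traces; this substitution preserves the term's action on elements of $\kappa$ because $\extend{f_\beta}$ is the identity off $A_\beta$ and its restriction to the relevant trace agrees with $\rho^{-1} \circ \extend{f_{\beta'}} \circ \rho$ by the consistency used to define the $f$'s. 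Limit stages require no extra work, since any finite collection of terms involves only finitely many functions, hence lies in some bounded $\mc F_\beta$.

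The main obstacle is making the rewriting step precise: one must verify that when a term composes several $\extend{f_\beta}$'s with $\mc S$-maps, the trajectory of any starting point $\alpha \in \kappa$ under the composition can be tracked through a controlled sequence of traces, and that the substitutions above can be performed coherently term-by-term. This is where the combinatorial heart of the argument lies and where the locally-small hypothesis together with the order-rigidity of the $\mc S$-maps is genuinely used; once this reduction is established, the transfinite induction goes through and the final family $\mc F = \bigcup_{\alpha < \kappa^+} \mc F_\alpha \setm \mc S$ satisfies the conclusion of the key lemma.
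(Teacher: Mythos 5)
Your overall strategy (transfinite recursion of length $\kappa^+$, applying Lemma \ref{lm:induction-step} at each successor stage and preserving the largeness invariant) is the same as the paper's, but the step you yourself flag as the ``combinatorial heart'' is exactly where the proposal breaks down, and the fix you sketch for it is wrong. You propose to rewrite a term by replacing $\extend{f_\beta}$ with $\rho\circ\extend{f_{\beta'}}\circ\rho^{-1}$, where $\beta'$ is a representative of $\beta$'s trace class, justified ``by the consistency used to define the $f$'s.'' No such consistency exists or is imposed anywhere: each $f_\beta$ is produced by an independent application of Lemma \ref{lm:induction-step}, and there is no reason whatsoever that $f_\beta\restriction(A_\beta\cap A_\alpha)$ should agree with a conjugate of $f_{\beta'}$ by order isomorphisms. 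Moreover this repair is aimed at the wrong problem: since $\alpha<\kappa^+$, the family $\{f_\beta:\beta<\alpha\}$ already has size at most $\kappa$ and needs no thinning. The genuine cardinality obstruction is the full $\mc S$, which can have size $\kappa^+$, and your treatment of it (``include the $\le\kappa$ elements of $\mc S$ whose domain or range meets $A_\alpha$ in a previously unseen way'') is an assertion, not an argument.

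The paper resolves this differently: the inductive invariant is stated not for the full $\mc S$ but for the growing family $\mc S_{<\alpha+1}$ of order isomorphisms between members of $\itrace{\mc A}{A_\delta}$ for $\delta\le\alpha$, which has size $\le\kappa$ by local smallness; at stage $\alpha$ one first proves a Claim that every $\mc F_{<\alpha}\cup\mc S_{<\alpha+1}$-term agrees on $\kappa\times\kappa$ with an $\mc F_{<\alpha}\cup\mc S_{<\alpha}$-term. That rewriting touches only the $\rho$'s, never the $f$'s: one discards $\rho$'s at either end of a term (they restrict to the identity on $\kappa$, because $\kappa$ is an initial segment of every $C\in\itrace{\mc A}{A}$ by the arrangement $\kappa\subs\bigcap\mc A$ from Lemma \ref{lm:coftopairs}), merges consecutive $\rho$'s into a single $\rho$ via ${\rho}_1\circ{\rho}_0={\rho}_{{\rho}_0^{-1}[A^*_0\cap A_1],{\rho}_1[A^*_0\cap A_1]}$, and replaces a $\rho_{E_0,E_1}$ sandwiched between $f_{\beta_0}$ and $f_{\beta_1}$ by $\rho_{A_{\beta_0}\cap E_0,A_{\beta_1}\cap E_1}\in\mc S_{<\alpha}$. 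The full statement for $\mc F\cup\mc S$ then follows at the end because any finite set of terms mentions only finitely many indices. Note also that your base case is justified incorrectly: an order-preserving bijection between sets of ordinals need not fix cofinally many points; the correct reason the $\mc S$-terms miss $y$ is again that each $\rho_{C_0,C_1}$ is literally the identity on $\kappa$ while $y$ moves every point.
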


Before proving the Key lemma, we show how the Key Lemma 
completes the proof of Theorem \ref{tm:-main-uncountable-transitive-group}.

So assume that the Key lemma holds. 

For each $\<A,B\>\in \mc D$ pick $\<A',B'\>\in \mc D'$ such that 
$\trans{A}{A'} $ is an isomorphism
between $\model {\<A,B\>}$ and  $\model {\<A',B'\>}$.
We assume that $\<A',B'\>=\<A,B\>$  for  $\<A,B\>\in \mc D'$.

Let 
\begin{displaymath}
g_{\<A,B\>}=\trans{A'}{A}\circ f_{\<A',B'\>}\circ \trans{A}{A'}
\in S(A).
\end{displaymath}
Let $G$ be the permutation group on ${\lambda}$ generated by 
\begin{displaymath}
\mc G=\{\extend{g_{\<A,B\>}}:\<A,B\>\in \mc D\}.
\end{displaymath}

\begin{lemma}\label{lm:homog}	
$G$ is ${\kappa}$-homogeneous.	
\end{lemma}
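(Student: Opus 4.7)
The strategy is to show, for arbitrary $X, Y \in \br {\lambda};{\kappa};$ with $|{\lambda} \setm X| = |{\lambda} \setm Y| = {\lambda}$, that a single element of $G$ of the form $g = (\extend{g_{\<A,Y\>}})^{-1} \circ \extend{g_{\<A,X\>}}$ already maps $X$ onto $Y$. The first step is to fix a common $A \in \mc A$ with $X \cup Y \subs A$, $|A \setm X| = |A \setm Y| = {\kappa}$, and both $\<A, X\>, \<A, Y\> \in \mc D$. The inductive freedom in the construction of Lemma \ref{lm:coftopairs}, combined with $|\mc C| \ge 2^{\kappa} = {\kappa}^+$ and the cofinality of $\mc C$ above any prescribed subset, lets us arrange this enrichment of $\mc D$.

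The core calculation is that $g_{\<A, B\>}[B] = K$ for every $\<A, B\> \in \mc D$, not just for pairs in $\mc D'$. Indeed, by definition $g_{\<A,B\>} = \trans{A'}{A} \circ f_{\<A',B'\>} \circ \trans{A}{A'}$ for the chosen isomorphic copy $\<A', B'\> \in \mc D'$, so
\begin{displaymath}
g_{\<A,B\>}[B] \;=\; \trans{A'}{A}\bigl[f_{\<A',B'\>}[B']\bigr] \;=\; \trans{A'}{A}[K].
\end{displaymath}
Now both $A$ and $A'$ contain ${\kappa}$, and since ${\kappa}$ is downward closed in the ordinals, it is the initial ${\kappa}$-segment of each. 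The order-isomorphism $\trans{A'}{A}$ must therefore send the initial ${\kappa}$-segment of $A'$ onto that of $A$, giving an order-automorphism of ${\kappa}$ that can only be the identity. Thus $\trans{A'}{A}\restriction {\kappa} = \operatorname{id}_{\kappa}$, and since $K \subs {\kappa}$ we get $\trans{A'}{A}[K] = K$, i.e., $g_{\<A,B\>}[B] = K$.

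Applying this both to $B = X$ and to $B = Y$ yields $g_{\<A, X\>}[X] = K = g_{\<A, Y\>}[Y]$, so the element $g = (\extend{g_{\<A, Y\>}})^{-1} \circ \extend{g_{\<A, X\>}} \in G$ satisfies $g[X] = g_{\<A,Y\>}^{-1}[K] = Y$. The main obstacle is the first step: one must verify that the inductive construction of Lemma \ref{lm:coftopairs} can really be arranged so that the pairs $\<A, X\>, \<A, Y\>$ are placed into $\mc D$ for each relevant $X, Y$. If one insists on the $\mc B$ produced in the lemma as stated, a fallback is to enlarge $X, Y$ to sets $B_X, B_Y \in \mc B$ with $X \subs B_X$ and $Y \subs B_Y$, map $B_X$ onto $B_Y$ by the same argument, and then adjust the images of $X$ and $Y$ inside $K$ by an auxiliary element of $G$; but the cleaner route is simply to build $\mc D$ large enough.
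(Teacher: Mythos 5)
Your key computation --- that $g_{\<A,B\>}[B]=\trans{A'}{A}[K]=K$ because ${\kappa}\subseteq A\cap A'$ forces $\trans{A'}{A}\restriction{\kappa}=\operatorname{id}\restriction{\kappa}$ --- is exactly the engine of the paper's proof, and it is correct. Where you diverge is at the point you yourself flag as the main obstacle: an arbitrary co-large $X\in\br {\lambda};{\kappa};$ need not occur as the second coordinate of any pair in $\mc D$, so $g_{\<A,X\>}$ need not exist. The paper does not enlarge $\mc D$; it uses a two-step composition that needs only Lemma \ref{lm:coftopairs} as stated. Given $X$, pick $\<A,B\>\in\mc D$ with $X\subseteq B$ (possible since $\mc B$ is cofinal); then $Z=g_{\<A,B\>}[X]\subseteq g_{\<A,B\>}[B]=K$ is a subset of ${\kappa}$ with $|Z|=|{\kappa}\setminus Z|={\kappa}$, so clause (4) of Lemma \ref{lm:coftopairs} together with condition (b) on $\mc D'$ guarantees a pair $\<C,Z\>\in\mc D'$, for which $g_{\<C,Z\>}=f_{\<C,Z\>}$ maps $Z$ exactly onto $K$; hence $\extend{g_{\<C,Z\>}}\circ\extend{g_{\<A,B\>}}$ maps $X$ onto $K$, and composing two such maps handles arbitrary $X,Y$. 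This is precisely the ``auxiliary element'' your fallback gestures at, and it is the reason clause (4) is in the lemma at all. Your primary route --- enlarging $\mc D$ so that every co-large $X$ is a second coordinate --- does appear viable (any such $X$ sits inside some $A\in\mc C$ with $X\cup{\kappa}\subseteq A$ and $|A\setminus X|={\kappa}$; the enlarged $\mc A$ remains locally small as a subfamily of $\mc C$; and the number of isomorphism types of the structures $\model{\<A,B\>}$ is still at most $2^{\kappa}={\kappa}^+$, so $\mc D'$ can remain of size ${\kappa}^+$ and the intransitivity argument is unaffected), but as written you assert rather than verify this, and it requires reworking Lemma \ref{lm:coftopairs}. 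Either make the fallback precise along the lines above --- which recovers the paper's proof --- or carry out the strengthened construction of $\mc D$ explicitly.
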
	

\begin{proof}[Proof of Lemma \ref{lm:homog}]
It is enough to show that for each  $X\in \br {\lambda};{\kappa};$
there is $g\in G$ with $g[X]=K$.

So fix  $X\in \br {\lambda};{\kappa};$. Pick $\<A,B\>\in \mc D$ such that $X\subs B$.

Then  
\begin{align*}
	Z=g_{\<A,B\>}[X] \subs g_{\<A,B\>}[B]=&(\trans{A'}{A}\circ f_{\<A',B'\>}\circ \trans{A}{A'})[B]\\
	=&(\trans{A'}{A}\circ f_{\<A',B'\>})[B']=\trans{A'}{A}[K]=K.
\end{align*}
Since $|Z|=|{\kappa}\setm Z|={\kappa}$, there is $C$ such that  
$\<C,Z\>\in \mc D'$.
Then  $f_{\<C,Z\>}[Z]=K$. Thus $\extend{g_{\<C,Z\>}}[Z]=K$
because $\<C',Z'\>=\<C,Z\>$ and so $f_{\<C,Z\>}=g_{\<C,Z\>}$.

Thus  $K=(\extend{g_{\<C,Z\>}}\circ \extend{g_{\<A,B\>}})[X]$. 
\end{proof}

\begin{lemma}\label{lm:nottr}	
	$G$ is not ${\kappa}$-transitive.
	\end{lemma}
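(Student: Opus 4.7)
The plan is to prove the stronger statement that $G$ is $\kappa$-intransitive, witnessed by the function $y$ fixed just before the Key Lemma, viewed as an injection with $\dom(y) = \ran(y) = \kappa \in \br \lambda;\kappa;$. Since $|\lambda \setm \kappa| = \lambda$, this $y$ is an admissible test function, so it suffices to show that $y$ is $G$-large: for every finite $\mc H \subseteq G$, $|y \setm \bigcup \mc H| = \kappa$.

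First I would fix a finite $\mc H \subseteq G$ and write each $h \in \mc H$ as a finite word in the generators $\extend{g_{\<A,B\>}}$ (with $\<A,B\> \in \mc D$) and their inverses. The strategy is to cover $h \cap y$ by a finite union of evaluations $t[\ ]$ of $\mc F \cup \mc S$-terms $t$; granting this for each $h$, the Key Lemma applied to the combined finite collection of terms yields $|y \setm \bigcup \mc H| = \kappa$ at once.

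The decomposition of $h$ would proceed in two layers. First, I would split each factor $\extend{g_{\<A_k,B_k\>}}^{\pm 1}$ into the $g$-part on $A_k$ and the identity on $\lambda \setm A_k$; this produces $2^n$ pattern regions $E_\pi \subseteq \lambda$ according to which branch each factor takes, and the all-identity pattern contributes nothing to $h \cap y$ because $y(\alpha) \neq \alpha$. On each of the remaining regions I would substitute $g_{\<A_k,B_k\>} = \trans{A'_k}{A_k} \circ f_{\<A'_k,B'_k\>} \circ \trans{A_k}{A'_k}$, so that $h|_{E_\pi}$ becomes a product of $\trans$-maps and factors from $\mc F$ and their inverses.

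The crux is a further partition of each $E_\pi$ into finitely many pieces indexed by which member of each $\itrace{\mc A}{A'_k}$ the trajectory of the computation visits at the relevant intermediate stage; on each piece, every sandwiched composition $\trans{A_{k-1}}{A'_{k-1}} \circ \trans{A'_k}{A_k}$ restricts to an order-preserving, $\mc A$-trace-preserving bijection between elements of $\itrace{\mc A}{A'_k}$ and $\itrace{\mc A}{A'_{k-1}}$, and hence lies in $\mc S$ by definition. The outermost $\trans$-maps are absorbed into $\mc S$ similarly, using $\kappa \subseteq \bigcap \mc A$ and the fact that we only need to track pairs in $\kappa \times \kappa$. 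I expect the main obstacle to be precisely this bookkeeping — verifying that every composition of $\trans$-maps produced by the expansion, once restricted to its appropriate trace class, really meets the definition of $\mc S$; once that is in place, the Key Lemma closes the argument immediately.
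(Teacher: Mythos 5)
Your proposal is correct and follows essentially the same route as the paper's proof: expand each $h\in G$ into subwords of the generators $g_{\<A,B\>}=\trans{A'}{A}\circ f_{\<A',B'\>}\circ\trans{A}{A'}$ (the subword/pattern-region bookkeeping handling the identity extensions), observe that each sandwiched pair $\trans{A_{k-1}}{A'_{k-1}}\circ\trans{A'_{k}}{A_{k}}$ is already a single element of $\mc S$ (so your further partition by trace classes is harmless but not needed), drop the outer $\trans$-maps on $\kappa\times\kappa$ since they fix $\kappa$ pointwise, and invoke the Key Lemma. The only difference is cosmetic: you establish the stronger claim that $y$ is $G$-large, whereas the paper checks only $y\not\subseteq h$ for each $h$; both follow from the same covering of $h\cap(\kappa\times\kappa)$ by finitely many $\mc F\cup\mc S$-term evaluations.
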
	
	
	\begin{proof}[Proof of Lemma \ref{lm:nottr}]
	We prove that $y\not\subs h$ for any $h\in G$.

	Assume that 
	\begin{displaymath}
	h=(g_0^+)^{\ell_0}\circ(g_1^+)^{\ell_1}\circ
	\dots\circ (g_{n-1}^+)^{\ell_{n-1}}, 
	\end{displaymath}
where $g_i=g_{\<A_i,B_i\>}=\rho_{A'_i,A_i}\circ f_{A'_i, B'_i}\circ \rho _{A_i,A'_i}$	
and $\ell_i\in \{-1,1\}$ for $i<n$.

Since $g_i^+\setm g_i$ is the identity function on ${\lambda}\setm A_i$,
we have 
\begin{multline*}
h\subs \bigcup 
\{(g_{i_0})^{\ell_{i_0}}\circ(g_{i_1})^{\ell_{i_1}}\circ
\dots\circ (g_{i_{k-1}})^{\ell_{i_{k-1}}}:\\ k<n, i_0<i_1<\dots<i_{k-1}<n
	\}.	
\end{multline*}

Fix $k\le n$ and   $i_0<i_1<\dots<i_{k-1}<n$.

Observe that  if $\ell_i=-1$ then 
\begin{displaymath}
(g_i)^{\ell_i}=(\rho_{A'_i,A_i}\circ f_{A'_i, B'_i}\circ \rho _{A_i,A'_i})^{-1}
=\rho_{A'_i,A_i}\circ (f_{A'_i, B'_i})^{-1}\circ \rho _{A_i,A'_i}.
\end{displaymath}

So
\begin{multline*}
	(g_{i_0})^{\ell_{i_0}}\circ(g_{i_1})^{\ell_{i_1}}\circ
\dots\circ (g_{i_{k-1}})^{\ell_{i_{k-1}}}=\\
\rho_{A'_{i_0},A_{i_0}}\circ (f_{A'_{i_0}, B'_{i_0}})^{\ell_{i_0}}\circ 
\rho _{A_{i_0},A'_{i_0}}\circ 
\rho_{A'_{i_1},A_{i_1}}\circ (f_{A'_{i_1}, B'_{i_1}})^{\ell_{i_1}}\circ 
\rho _{A_{i_1},A'_{i_1}}\circ 
\end{multline*}

For $j<k$
let 
\begin{displaymath}
	\rho^*_j=\rho _{A_{i_j},A'_{i_j}}\circ 
	\rho_{A'_{i_{j+1}},A_{i_{j+1}}}.
\end{displaymath}
Observe that 
\begin{displaymath}
	\rho^*_j=
	\rho_{
	\rho_{A_{i_{j+1}},A'_{i_{j+1}}}[A_{i_{j}}\cap A_{i_{j+1}}],
	\rho _{A_{i_{j}},A'_{i_{j}}}[A_{i_{j}}\cap A_{i_{j+1}}] 
	}\in \mc S.
\end{displaymath}
(See Figure \ref{fg}.)

\begin{figure}[h]
\begin{tikzpicture}
	\coordinate (aj) at (5,0);
	\coordinate (ai) at (5,1);
\coordinate (an) at (5,0,5);

\coordinate (aip) at (0,2.5);
\coordinate (ajp) at (0,0);

\coordinate (ax)  at ($(aip)+(0.866,-0. 5)$);
\coordinate (ay)  at ($(ajp)+(0.866,0. 5)$);

\coordinate (al)  at ($(aip)+(0,-0. 5)$);
\coordinate (au)  at ($(ajp)+(0,0. 5)$);
\coordinate (a)  at ($(aj)+(0,0.5)$);

\coordinate (b)  at ($(a)+(2,0)$);

\path[draw,->, shorten >= 2mm, shorten <= 4mm]  (a) -- (al)  
node[above,pos=0.5, sloped]
{$\rho_{{A_{i_{j}}},{A'_{i_{j}}}} $};
\path[draw,->, shorten >= 4mm, shorten <= 1mm]  (au) -- (a)
node[below,pos=0.5, sloped]
{$\rho_{{A'_{i_{j+1}}},{A_{i_{j+1}}}} $};
\path[draw,->]  (au) -- (al) node[pos=0.5,left] {$\rho^*_j$};

\draw (ai)   circle (1cm) 
                  node[anchor=south] {$A_{i_j}$};
\draw (aj)   circle (1cm) 
				  node[anchor=north] {$A_{i_{j+1}}$};
\draw (ajp)   circle (1cm)[anchor=north] node {$A'_{i_{j+1}}$} ;
\draw (aip)   circle (1cm)[anchor=south] node {$A'_{i_{j}}$} ;

\draw[dotted, ->] (b) -- (a);
\node[right] at (b) {$A_{i_j}\cap A_{i_{j+1}}$};

\path[draw] (ax) arc (30:150:1); 
\path[draw] (ay) arc (-30:-150:1); 
\end{tikzpicture}
\caption{The function $\rho^*_j$}\label{fg}
\end{figure}

Thus 
\begin{multline*}
	(g_{i_0})^{\ell_{i_0}}\circ(g_{i_1})^{\ell_{i_1}}\circ
	\dots\circ (g_{i_{k-1}})^{\ell_{i_{k-1}}}=\\
	\rho_{A_{i_0},A'_{i_0}}\circ 
	(f_{A'_{i_0}, B'_{i_0}})^{\ell_0}\circ \rho^*_0\circ
	(f_{A'_{i_1}, B'_{i_1}})^{\ell_1}\circ \rho^*_1\circ \dots \\
	\circ 
	(f_{A'_{i_{k-1}},B'_{i_{k-1}}})^{\ell_{i_{k-1}}}\circ \rho_{A'_{i_{k-1}},A_{i_{k-1}}}. 
	\end{multline*}

Since $\rho_{A_\ell,A'_\ell}\restriction {\kappa}=\operatorname{id}\restriction 	{\kappa}$,
we have
\begin{multline*}
\bigl(	(g_{i_0})^{\ell_{i_0}}\circ(g_{i_1})^{\ell_{i_1}}\circ
	\dots\circ (g_{i_{k-1}})^{\ell_{i_{k-1}}}\bigr) 
	\cap{\kappa}\times {\kappa}\subs\\ 
	(f_{A'_{i_0}, B'_{i_0}})^{\ell_0}\circ \rho^*_0\circ
	(f_{A'_{i_1}, B'_{i_1}})^{\ell_1}\circ \rho^*_1\circ \dots \\
	\circ (f_{A'_{i_{k-1}},B'_{i_{k-1}}})^{\ell_{i_{k-1}}}
	\end{multline*}
But $(f_{A'_{i_0}, B'_{i_0}})^{\ell_0}\circ \rho^*_0\circ
(f_{A'_{i_1}, B'_{i_1}})^{\ell_1}\circ \rho^*_1\circ \dots 
\circ (f_{A'_{i_{k-1}},B'_{i_{k-1}}})^{\ell_{i_{k-1}}}=t[]$
for the  $\mc F\cup \mc S$-term 
$t=\<(f_{A'_{i_0}, B'_{i_0}})^{\ell_0},\rho^*_0,
(f_{A'_{i_1}, B'_{i_1}})^{\ell_1},\rho^*_1,\dots, 
(f_{A'_{i_{k-1}},B'_{i_{k-1}}})^{\ell_{i_{k-1}}}\>$.

Since there  are only finitely many sequences $i_0<\dots i_{k-1}<n$,
we obtain that 
$h\cap {\kappa}\times {\kappa}$ is covered by the union of finitely many 
$\mc F\cup \mc S$-terms.

But $y$ is not covered by the union of finitely many 
$\mc F\cup \mc S$-terms. So $y$ witnesses that $G$ is not 
${\kappa}$-transitive.
\end{proof}

\begin{proof}[Proof of the Key Lemma \ref{lm:keylemma}]
Write $\mc D'=\{\<A_{\alpha},B_{\alpha}\>:{\alpha}<{\kappa}^+\}$.

By transfinite induction, 
we define functions $\{f_{\alpha}:{\alpha}<{\kappa}^+\}$ such that 
taking 
\begin{displaymath}
\mc F_{<{\beta}}=\{f_{\gamma}:{\gamma}<{\beta}\}
\end{displaymath}
and 
\begin{displaymath}
	\mc S_{<{\beta}}=\{\trans{C_0}{C_1}: {\delta},{\gamma}< {\beta}, 
	C_0\in \itrace{\mc A}{A_{\delta}},
 C_1\in \itrace{\mc A}{A_{\gamma}},
 \rho_{C_0,C_1}[\mc A\restriction C_0]=\mc A\restriction C_1   \},	
	\end{displaymath}
we have	
\begin{enumerate}[(i)]
\item $f_{\alpha}\in \perm{A_{\alpha}}$,
\item $f_{\alpha}[B_{\alpha}]=K$,
\item 
if $\mc H$ is a finite collection of  
$\mc F_{<{\alpha}+1}\cup \mc S_{<\alpha+1}$-terms, 
then 
\begin{displaymath}
|y\setm \mc H[\ ]|={\kappa}.
\end{displaymath}
\end{enumerate}

Assume that we have  constructed $f_{\beta}$ for ${\beta}<{\alpha}$.
Then we have: 
\begin{equation}\tag{$*$}
\label{enum:star} \text{\it if $\mc H$ is a finite collection of  
$\mc F_{<\alpha}\cup \mc S_{<\alpha}$-terms, then } 
|y\setm \mc H[\ ]|={\kappa}.
\end{equation}

To continue the construction we need a bit more.
\begin{claim} 
	If $\mc H$ is a finite collection of  
	$\mc F_{<{\alpha}}\cup \mc S_{<\alpha+1}$-terms, then 
	\begin{displaymath}
	|y\setm \mc H[\ ]|={\kappa}.
	\end{displaymath}	
\end{claim}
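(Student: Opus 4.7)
My plan is to reduce the claim to $(*)$ by absorbing the (at most finitely many) ``new'' maps in $\mc S_{<\alpha+1}\setminus\mc S_{<\alpha}$ that appear among the terms of $\mc H$. Each such map has the form $\rho_{C_0,C_1}$ where at least one of $C_0,C_1$ lies in $\itrace{\mc A}{A_\alpha}$; list these as $\rho^{(1)},\dots,\rho^{(m)}$, say with $C_1^{(i)}\in\itrace{\mc A}{A_\alpha}$ for each $i$ (the other case is symmetric).

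First I would exploit the local smallness of $\mc A$: the family $\trace{\mc A}{A_\alpha}$, and hence the collection of all finite intersections $\itrace{\mc A}{A_\alpha}$, has size at most $\kappa$. Combined with property (a) of $\mc D'$ and $|\mc D'|=\kappa^+$, a pigeonhole argument on isomorphism types should produce $\beta<\alpha$ such that $\trans{A_\alpha}{A_\beta}$ carries the finitely many relevant trace patterns on $A_\alpha$ to matching trace patterns on $A_\beta$. Setting $\widetilde C_1^{(i)}:=\trans{A_\alpha}{A_\beta}[C_1^{(i)}]\in\itrace{\mc A}{A_\beta}$, I can then factor
\begin{displaymath}
\rho_{C_0^{(i)},C_1^{(i)}}\;=\;\rho_{\widetilde C_1^{(i)},C_1^{(i)}}\circ \rho_{C_0^{(i)},\widetilde C_1^{(i)}},
\end{displaymath}
where the second factor lies in $\mc S_{<\alpha}$, and the first---an order-preserving, $\mc A$-trace-preserving bijection between subsets of $A_\beta$ and $A_\alpha$ of size $\le\kappa$---is the residual ``$\alpha$-dependent'' piece.

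After substituting into every $t\in\mc H$, I obtain a new finite collection $\mc H'$ whose components split into $\mc F_{<\alpha}\cup\mc S_{<\alpha}$-parts interspersed with residual maps $\rho_{\widetilde C_1^{(i)},C_1^{(i)}}$ that are supported on $\bigcup_i C_1^{(i)}\subseteq A_\alpha$, a set of size $\le\kappa$. Pulling back through the finitely many surrounding composition factors, the net change from $\bigcup\mc H[\ ]$ to the corresponding union of $\mc F_{<\alpha}\cup\mc S_{<\alpha}$-terms is confined to a subset of $\kappa$ of size $<\kappa$; on the complement each modified term reduces to a genuine $\mc F_{<\alpha}\cup\mc S_{<\alpha}$-term, and $(*)$ applies to yield $|y\setminus\bigcup\mc H[\ ]|=\kappa$.

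The main obstacle is realizing the pigeonhole step with $\beta<\alpha$ rather than merely $\beta\in\mc D'$: the construction of $\mc D'$ must be rich enough---via property (a) together with $|\mc D'|=\kappa^+$---for cofinally many isomorphic copies of $\<A_\alpha,B_\alpha\>$ (with matching trace pattern) to occur below $\alpha$. Once that is secured, the technical heart of the proof is showing that the residual maps $\rho_{\widetilde C_1^{(i)},C_1^{(i)}}$ genuinely affect only a set of size $<\kappa$ of possible values of $y$; this uses local smallness of $\mc A$ in an essential way, since without it $\itrace{\mc A}{A_\alpha}$ could be too large either to match by pigeonhole or to localize the resulting error.
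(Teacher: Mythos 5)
Your overall strategy---rewriting the finitely many maps of $\mc S_{<\alpha+1}\setm\mc S_{<\alpha}$ that occur in $\mc H$ so that $(*)$ applies---is the right one, but both steps you use to carry it out have genuine gaps. First, the pigeonhole step is not available: you need some $\beta<\alpha$ such that $\trans{A_\alpha}{A_\beta}$ matches the relevant trace patterns, but $\mc D'$ comes with a fixed enumeration $\{\<A_\gamma,B_\gamma\>:\gamma<\kappa^+\}$, and property (a) only guarantees that each isomorphism type occurs \emph{somewhere} in $\mc D'$, not that it occurs cofinally or below any given $\alpha$; for all you know, $\<A_\alpha,B_\alpha\>$ is the first pair of its type. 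You flag this yourself as ``the main obstacle,'' but nothing in the construction resolves it. Second, and more fatally, the error-localization step is false as stated. The residual map $\rho_{\widetilde C_1^{(i)},C_1^{(i)}}$ is an order isomorphism between two sets of size $\kappa$ which in general agree only on $\kappa$ (where both are the identity); being ``supported on a set of size $\le\kappa$'' is no constraint at all, since $|y|=\kappa$. Inside a composition, the presence of this residual can change the value of the term at $\kappa$ many points of $\kappa$, so there is no reason the discrepancy between $\bigcup\mc H[\ ]$ and your modified union should meet $y$ in a set of size $<\kappa$. Local smallness of $\mc A$ bounds the number of traces, not the size of the sets being permuted, so it cannot deliver the jump from ``$\le\kappa$'' to ``$<\kappa$.''

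The paper's proof needs neither step. Since every $f$ occurring in an $\mc F_{<\alpha}\cup\mc S_{<\alpha+1}$-term has index $<\alpha$, each new $\rho$ is either at an end of the term---where $\rho\restriction\kappa=\operatorname{id}\restriction\kappa$ makes it irrelevant for $t[\ ]\cap(\kappa\times\kappa)$---or sits in a block of consecutive $\rho$'s sandwiched between some $f_{\alpha_0}$ and $f_{\alpha_1}$ with $\alpha_0,\alpha_1<\alpha$. By the composition identity (\ref{eq:dagg}) such a block collapses to a single $\rho_{E_0,E_1}$, and composing with $f_{\alpha_0}\in\perm{A_{\alpha_0}}$ and $f_{\alpha_1}\in\perm{A_{\alpha_1}}$ replaces it by $\rho_{A_{\alpha_0}\cap E_0,A_{\alpha_1}\cap E_1}$, which already lies in $\mc S_{<\alpha}$. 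This yields an exact identity $t[\ ]\cap(\kappa\times\kappa)=s_t[\ ]\cap(\kappa\times\kappa)$ with $s_t$ an $\mc F_{<\alpha}\cup\mc S_{<\alpha}$-term, and $(*)$ finishes with no error set at all. If you wanted to salvage your factorization, the same sandwiching observation applied to your residual maps would show that their effective parts are already in $\mc S_{<\alpha}$---but at that point the detour through $\beta$ is superfluous.
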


\begin{proof}
First  observe that 
if ${\rho}_i={\rho}_{A_i,A^*_i}$ for $i<2$, then 
 \begin{equation}\tag{\ddag}\label{eq:dagg}
 	{\rho}_1\circ {\rho}_0={\rho}_{{\rho}_0^{-1}[A^*_0\cap A_1],{\rho}_1[A^*_0\cap A_1]}.
 \end{equation}

Let
\begin{displaymath}
t=\<t_0,t_1,\dots, t_n\>
\end{displaymath}
be an element of $\mc H$.
Since ${\rho}_{C_0,C_1}\restriction {\kappa} =\operatorname{id}\restriction {\kappa}$,  
$t[\ ]\cap 	{\kappa}\times 	{\kappa}=\<t_1,\dots t_n\>[\ ]\cap {\kappa}\times {\kappa}$ if $t_0\in \mc S_{<\alpha+1}$.
So we can assume that $t_0\in \mc F_{<{\alpha}}$.
Similar argument give that we can assume that $t_n\in \mc F_{<{\alpha}}$.

Now assume that
\begin{displaymath}
\<t_i,\dots , t_j\>=\<f_{\alpha_i},\rho_{C_{i+1},D_{i+1}},\rho_{C_{i+2},D_{i+2}},  \dots, \rho_{C_{j-1},D_{j-1}},f_{{\alpha}_j}\>
\end{displaymath} 
Then, by (\ref{eq:dagg})
\begin{displaymath}
	\rho_{C_{i+1},D_{i+1}}\circ\rho_{C_{i+2},D_{i+2}}\circ   \dots \circ \rho_{C_{j-1},D_{j-1}} = \rho_{E_i, E_j}.
\end{displaymath}
for some $E_i\in \trace {\mc A}{C_{i+1}}$
and $E_j\in \trace {\mc A}{D_{j-1}}$.

Thus we can assume that $j=i+2$ and  
\begin{displaymath}
\<t_i,t_{i+1}, t_{i+2}\>=\<f_{{\alpha}_0},{\rho}_{E_0,E_1},f_{{\alpha}_1}\>.
\end{displaymath}

Now 
\begin{displaymath}
	f_{{\alpha}_0}\circ {\rho}_{E_0,E_1}\circ f_{{\alpha}_1}=
	f_{{\alpha}_0}\circ {\rho}_{A_{{\alpha}_0}\cap E_0,A_{{\alpha}_1}\cap E_1}\circ f_{{\alpha}_1}
\end{displaymath}
and ${\rho}_{A_{{\alpha}_0}\cap E_0,A_{{\alpha}_1}\cap E_1}\in \mc S_{<{\alpha}}$.

Thus there is a  $\mc F_{<{\alpha}}\cup \mc S_{<\alpha}$-terms   $s_t$
such that 
\begin{displaymath}
t[\ ]\cap ({\kappa}\times {\kappa})=s_t[\ ]\cap ({\kappa}\times {\kappa}).
\end{displaymath}

Since $|y\setm \bigcup\{s_t[\ ]:t\in \mc H\}|={\kappa}$  by (\ref{enum:star}),
the Claim holds. 
\end{proof}

Since the claim holds, we can  apply Lemma \ref{lm:induction-step} for the family 
$\mc F =\mc F_{<{\alpha}}\cup \mc S_{{<\alpha}+1}$ 
to obtain $f_{\alpha}$ as  $g$. 

So we proved the Key Lemma \ref{lm:keylemma}.
\end{proof}

So we proved theorem \ref{tm:-main-uncountable-transitive-group}
\end{proof}

The following theorem is hidden in \cite{K}:
\begin{theorem}\label{tm:flat}
If ${\kappa}^{\omega}={\kappa}$, ${\lambda}={\kappa}^{+n}$ for some $n<{\omega}$,
and $\Box_{{\nu}}$ holds for each  ${\kappa}\le {\nu}<{\lambda}$,
then there is a cofinal, locally small family in $\br {\lambda};{\kappa};$.	
\end{theorem}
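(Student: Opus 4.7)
I would proceed by induction on $n$. The base case $n=0$ is trivial: $\mc A=\{\kappa\}$ is a cofinal, locally small subfamily of $\br{\kappa};{\kappa};$.

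For the induction step, assume a cofinal, locally small family $\mc B\subseteq\br{\kappa^{+n}};{\kappa};$ has been constructed, and set $\lambda=\kappa^{+(n+1)}$. I would fix a $\Box_{\kappa^{+n}}$-sequence $\<C_\alpha:\alpha<\lambda\text{ limit}\>$, so that each $C_\alpha\subseteq\alpha$ is club, $\operatorname{otp}(C_\alpha)\le\kappa^{+n}$, and $\beta\in\lim(C_\alpha)$ implies $C_\beta=C_\alpha\cap\beta$. Then recursively along the tree induced by $\Box$ (where $\beta<_T\alpha$ iff $\beta\in\lim(C_\alpha)$), attach to each $\alpha<\lambda$ a bijection $\pi_\alpha\colon|\alpha|\to\alpha$ (possible since $|\alpha|\le\kappa^{+n}$) so that $\pi_\alpha$ and $\pi_\beta$ agree on a canonical piece whenever $\beta\in\lim(C_\alpha)$---the coherence of the $C_\alpha$'s is thereby transferred to the $\pi_\alpha$'s. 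Define $\mc A_\alpha=\{\pi_\alpha[B]:B\in\mc B,\ B\subseteq|\alpha|\}$ and $\mc A=\bigcup_{\alpha<\lambda}\mc A_\alpha$.

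Cofinality of $\mc A$ in $\br{\lambda};{\kappa};$ is immediate: since $\cf(\lambda)=\lambda>\kappa$, any $X\in\br{\lambda};{\kappa};$ is bounded by some $\alpha<\lambda$ with $|\alpha|\le\kappa^{+n}$, and cofinality of $\mc B$ produces $B\in\mc B$ with $\pi_\alpha^{-1}[X]\subseteq B$, whence $X\subseteq\pi_\alpha[B]\in\mc A$.

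The heart of the argument---and the main obstacle---is local smallness. Fixing $A=\pi_{\alpha_0}[B_0]\in\mc A_{\alpha_0}$, one has to bound $|\{A'\cap A:A'\in\mc A\}|$ by $\kappa$. For $A'=\pi_\beta[B']\in\mc A_\beta$ the trace equals $\pi_{\alpha_0}\bigl[(\pi_{\alpha_0}^{-1}\circ\pi_\beta)[B']\cap B_0\bigr]$, so the plan is to show that as $\beta$ varies over $\lambda$, the composition $\pi_{\alpha_0}^{-1}\circ\pi_\beta$ restricted to the relevant domain takes only ``few'' canonical shapes, determined by the position of $\alpha_0$ and $\beta$ on the $\Box$-tree (in particular, by their highest common predecessor and the short ``drop-down'' pieces of $C_\beta$ below $\alpha_0$). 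This collapses the count of traces on $A$ to intersections $\tilde B'\cap B_0$ of pairs from $\mc B$ inside a set of size $\le\kappa^{+n}$, which by the inductive local smallness of $\mc B$ contributes at most $\kappa$. The assumption $\kappa^\omega=\kappa$ enters when amalgamating countably many layers of such traces that may arise from nested $C$-sequences along a branch, keeping the total bounded by $\kappa$. The delicate combinatorial bookkeeping along the $\Box$-tree---verifying that the coherence of the $\pi_\alpha$'s really does make the trace computation ``factor through'' $\mc B$---is the technical crux of the argument.
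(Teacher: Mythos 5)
Your proposal has a genuine gap, and it sits exactly where you say the ``technical crux'' is: the verification of local smallness is not carried out, and the plan as described does not obviously deliver the bound $\kappa$. Note first that the paper itself does not prove Theorem \ref{tm:flat} from scratch: it extracts it from Knight's flat-morass construction of the \emph{weakly rounded} subsets of $\kappa^{+n}$ (subsection 2.4 of \cite{K}), where cofinality is Lemma 2.4.1 and local smallness is a Claim proved there. Your inductive frame and the cofinality argument are fine, but the heart of the matter is bounding $|\{A'\cap A:A'\in\mc A\}|$ by $\kappa$, and here the appeal to ``few canonical shapes determined by the position on the $\Box$-tree'' breaks down. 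A single $\Box_{\kappa^{+n}}$-sequence only coheres at limit points: for a fixed $\alpha_0$ the traces $C_\beta\cap\alpha_0$, as $\beta$ ranges over $\lambda$, decompose as $C_{\delta}$ (for some $\delta\le\alpha_0$) together with a ``drop-down'' tail of non-limit points of $C_\beta$ below $\alpha_0$, and that tail can be an arbitrary sequence of order type up to $\omega$ converging to $\alpha_0$. So the number of shapes of $\pi_{\alpha_0}^{-1}\circ\pi_\beta$ that your scheme controls is naturally $|\alpha_0|^{\omega}=\kappa^{+n}$, not $\kappa$; composing with the inductively locally small $\mc B$ then yields a trace family of size $\kappa^{+n}$ on $A$, which is useless for $n\ge 1$.

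Getting the count down to $\kappa$ is precisely why the hypothesis demands $\Box_\nu$ for \emph{every} $\kappa\le\nu<\lambda$ simultaneously, and why Knight assembles these into a morass-like structure rather than iterating one level at a time: the bijections (or, in his setting, the weakly rounded sets themselves) must be coherent across all intermediate cardinal levels at once, so that the trace of any $A'$ on $A$ factors through one of only $\kappa$ many precomputed configurations. Your one-paragraph gesture at ``amalgamating countably many layers'' using $\kappa^\omega=\kappa$ does not substitute for this; as written, the induction step from $\kappa^{+n}$ to $\kappa^{+(n+1)}$ loses local smallness. To repair the argument you would either have to prove the missing combinatorial lemma about coherent systems of bijections (essentially rebuilding a flat morass), or simply cite \cite{K} as the paper does.
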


Indeed, in subsection 2.4 of \cite{K} the author defines the {\em weakly rounded}
subsets of ${\lambda}={\kappa}^{+n}$, in Lemma 2.4.1 he shows that the family of 
weakly rounded sets is cofinal, finally on page 52 he proves a Claim which 
clearly implies that the family of 
weakly rounded sets is locally small.   

Putting together Theorems  \ref{tm:-main-uncountable-transitive-group}
and  \ref{tm:flat} we obtain the following corollary.

\begin{corollary}\label{cor:largekappa}
	If ${\kappa}^{\omega}={\kappa}$, ${\lambda}={\kappa}^{+n}$ for some $n<{\omega}$,
	and $\Box_{{\nu}}$ holds for each  ${\kappa}\le {\nu}<{\lambda}$,
then there is a ${\kappa}$-homogeneous, but not ${\kappa}$-transitive 
permutation group on ${\lambda}$.		
\end{corollary}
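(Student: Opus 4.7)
The plan is to derive this corollary directly by chaining the two preceding results, since essentially all the real work has been absorbed into Theorem \ref{tm:-main-uncountable-transitive-group} and Theorem \ref{tm:flat}. The argument should reduce to a single invocation of each, with the statements designed to compose cleanly.

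First, I would apply Theorem \ref{tm:flat} to the cardinals $\kappa$ and $\lambda = \kappa^{+n}$. The hypotheses $\kappa^{\omega}=\kappa$ and $\Box_{\nu}$ for each $\kappa\le\nu<\lambda$ are exactly those required by that theorem, so it produces a cofinal, locally small family $\mc A\subseteq\br\lambda;\kappa;$. This step is a pure black-box application; no extra combinatorics on $\mc A$ is needed at this stage.

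Second, I would feed $\mc A$ into Theorem \ref{tm:-main-uncountable-transitive-group}. The conclusion of that theorem — a $\kappa$-homogeneous but not $\kappa$-transitive permutation group on $\lambda$ — is precisely what the corollary asserts, so no further manipulation of the group is required. I would also double-check that the preliminary Lemma \ref{lm:coftopairs} (which refines a cofinal locally small family into the family $\mc D$ of pairs used in the construction) accepts $\mc A$ as input; this is immediate from the definitions.

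The only non-routine point I anticipate is the hypothesis $2^{\kappa}=\kappa^+$ that is invoked in Theorem \ref{tm:-main-uncountable-transitive-group} (through Lemma \ref{lm:coftopairs}, where $2^{\kappa}=\kappa^+$ is used to guarantee that $|\mc C|\ge 2^{\kappa}$ so that every subset of $\kappa$ of the right form can be placed into $\mc B$). This assumption is not spelled out in the corollary, so I would either add it explicitly to the statement or remark that in the intended applications it is automatic: for instance, under $V=L$ the hypotheses of Theorem \ref{tm:flat} hold for every such $\kappa,\lambda$ and $2^{\kappa}=\kappa^+$ holds as well. Apart from this bookkeeping detail there is no genuine mathematical obstacle; the substantive difficulties have already been dealt with in the earlier sections.
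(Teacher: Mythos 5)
Your proposal is exactly the paper's argument: the corollary is obtained by feeding the cofinal, locally small family from Theorem \ref{tm:flat} directly into Theorem \ref{tm:-main-uncountable-transitive-group}, with no further work. Your observation about the hypothesis $2^{\kappa}={\kappa}^+$ is also well taken --- the corollary as stated omits it even though Theorem \ref{tm:-main-uncountable-transitive-group} (via Lemma \ref{lm:coftopairs}) requires it, so it should either be added to the statement or noted to hold in the intended applications such as $V=L$.
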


\section{${\omega}$-homogeneous but not ${\omega}$-transitive permutation 
groups in the Cohen model}

For $f\in \perm {\kappa}$ let $\supp(f)=\{{\alpha}: f({\alpha})\ne 	{\alpha}\}$.
Write 
\begin{displaymath}
\parcperm {\lambda}{\omega}=\{f\in \perm {\lambda}: 
|\supp(f)|\le {\omega}\}.
\end{displaymath}

\begin{theorem}
\label{tm:ohnot_con}
If  
$P=\fin(2^{\omega},2)$ then
\begin{multline}\notag
V^P\models
\mbox{``for each 
${\lambda}\ge{{\omega}_1}$ there is an ${\omega}$-homogeneous} 
\\\text{ and ${\omega}$-intransitive permutation group on ${\lambda}$.''}
\end{multline}

\end{theorem}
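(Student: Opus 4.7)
I would work in the extension $V^P$. First, since $P$ is ccc with $|P|^V=(2^\omega)^V$, by reducing to a CH ground model if necessary (which does not affect the conclusion, as the statement concerns only $V^P$) we may assume $|P|^V=\omega_1$, so that $V^P\models 2^\omega=\omega_1$ by standard nice-name counting. For $\omega_1\le\lambda<\omega_\omega$, Theorem \ref{tm:countable-main}(i) already supplies the desired group as a ZFC result applicable in $V^P$. The substantive case is therefore $\lambda\ge\omega_\omega$, where the ZFC hypotheses for Theorem \ref{tm:countable-main}(ii) (namely cardinal arithmetic and $\Box$ principles at singular cardinals of countable cofinality below $\lambda$) need not hold, and where Theorem \ref{tm:-main-uncountable-transitive-group} cannot be directly invoked because a cofinal locally small family on $[\lambda]^\omega$ is not generally available.

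\smallskip

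\noindent For $\lambda\ge\omega_\omega$, I would build, inside $V^P$, the generating family $\mc F=\bigcup_{\alpha<\omega_1}\mc F_\alpha$ of a permutation group $G$ on $\lambda$ together with a witness $y\in\perm\omega$ to $\omega$-intransitivity by transfinite recursion of length $\omega_1$, using Lemmas \ref{lm:induction-step-step} and \ref{lm:induction-step} rather than the full Theorem \ref{tm:-main-uncountable-transitive-group}. At stage $\alpha$, having countable $\mc F_\alpha\subs\perm\lambda$ such that $y$ is $\mc F_\alpha$-large (in the sense $|y\setminus\bigcup\mc H[\ ]|=\omega$ for every finite family $\mc H$ of $\mc F_\alpha$-terms), I would apply Lemma \ref{lm:induction-step} with $\kappa=\omega$ to choose a new generator $f_\alpha\in\perm{A_\alpha}$ (for a suitably chosen countable $A_\alpha\subs\lambda$) that maps a given $B_\alpha$ to a given $C_\alpha$ while preserving the largeness of $y$ against all $\fx{\mc F_\alpha}{x}$-terms. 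Lemma \ref{lm:induction-step-step} supplies the fine-grained extension mechanism inside each step. The $\omega_1$-many Cohen reals of $P$ play the role of the ``generic decisions'' needed to keep $y$ uncovered at each stage and to coordinate the countably many constraints active at that stage.

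\smallskip

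\noindent The main obstacle is to reconcile an $\omega_1$-step recursion with the $\lambda^\omega>\omega_1$-many homogeneity tasks on $\lambda$. I would address this by a type-reducing argument: arrange the recursion so that the handled tasks cover every isomorphism type of pair $\<X,Y\>\in\br\lambda;\omega;\times\br\lambda;\omega;$ (with big complements) up to composition with bijections of $\lambda$ that become members of $G$. For this, at each stage one uses the Cohen-generic $\omega_1$-skeleton to ensure that every countable type is represented, and one checks, as in Lemma \ref{lm:nottr}, that the resulting group on $\lambda$ restricted to $\omega$ is covered by finitely many terms over $\mc F$, so that $\mc F$-largeness of $y$ upgrades to $G$-largeness of $y$. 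The technical core will be precisely this type-coverage argument together with the verification that the Cohen genericity indeed furnishes, step by step, an extension of the kind demanded by Lemma \ref{lm:induction-step} for \emph{every} $\mc F_\alpha$ and every pair $\<B_\alpha,C_\alpha\>$ the bookkeeping is handling.
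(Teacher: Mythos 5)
Your reduction for $\omega_1\le\lambda<\omega_\omega$ via Theorem \ref{tm:countable-main}(i) is fine, but for larger $\lambda$ (indeed for every $\lambda>\omega_1$) the plan has a genuine gap at exactly the point you flag as ``the main obstacle''. A group $G$ generated by $\omega_1$ many permutations $f_\alpha\in\perm{A_\alpha}$ with each $A_\alpha$ countable moves no point outside $S=\bigcup_{\alpha<\omega_1}A_\alpha$, and $|S|\le\omega_1<\lambda$; hence any countable $X\subs\lambda\setm S$ is fixed pointwise by every element of $G$, and $G$ cannot be $\omega$-homogeneous on $\lambda$. Your ``type-reducing argument'' would therefore have to adjoin $\lambda^{\omega}$ many further bijections of $\lambda$ to $G$, and that is not a routine verification: once they are adjoined, you must show that the intransitivity witness $y$ remains large against words mixing these with the $\omega_1$ generic generators, and nothing in your sketch explains why $y$ survives against a family of that size. (The opening manoeuvre of ``reducing to a CH ground model'' is also not available --- the theorem is about the given $V$ and its extension $V^P$ --- but this is immaterial to the argument.)

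The paper resolves the obstacle by a two-step decomposition, and the second step is the idea your proposal is missing. First (Theorem \ref{tm:aux}) it builds an $\omega$-homogeneous group $G$ on $\omega_1$ containing $\parcperm{\omega_1}{\omega}^{V}$, together with a $G$-large generic $r\in\perm{\omega}$, by exploiting the iteration structure of the Cohen forcing much as you describe; this is where Lemma \ref{lm:induction-step-step} and the density argument of Lemma \ref{lm:cohenextension} do their work. Second (Lemma \ref{lm:steppingup}) it steps up to $\lambda$ by generating $G^*$ from $\{\extend{g}:g\in G\}$ together with the \emph{entire ground-model group} $\parcperm{\lambda}{\omega}^{V}$. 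Homogeneity of $G^*$ follows because ccc forcing has the covering property: every $X\in\br {\lambda};{\omega};$ of $V^P$ lies inside a ground-model countable set, which a ground-model permutation carries into $\omega_1$, where $G$ takes over. Largeness of $r$ is preserved because, again by covering, any word in $G^*$ restricted to $\omega\times\omega$ is absorbed into finitely many elements of $G$ (Claims \ref{cl:hrestriction}--\ref{cl:rlarge}); this absorption is precisely the step your proposal leaves open. Any attempt to run your direct recursion on $\lambda$ would in effect have to reprove this stepping-up lemma.
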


The proof of this theorem is based on the following Lemma.

Let us recall that if  $g\in \perm {\omega_1}$ then 
$\extend{g}=g\cup (\operatorname{id}\restriction ({\lambda}\setminus{\omega_1}))$.

\begin{lemma}\label{lm:steppingup}  Assume that
$V_0\subs V_1$ are ZFC models and ${\lambda}\ge{\omega}_2$  is
a cardinal in $V_1$. If
\begin{enumerate}
\item $\forall X\in\big(\br {\lambda};{\omega};\big)^{V_1}\ $
$\exists Y\in\big(\br {\lambda};{\omega};\big)^{V_0}\ $
$X\subs Y$,
\item $V_1\models$
$G$ is an ${\omega}$-homogeneous permutation group on ${\omega}_1$,    
\\ 
\makebox[3cm]{} \ $G\supset{\parcperm {\omega_1}{\omega}}^{V_0}$, and  $r\in \perm{\omega}$ 
is $G$-large,
\end{enumerate}
then in $V_1$
the permutation group $G^*$ on ${\lambda}$ generated by 
\begin{displaymath}
\{\extend{g}:g\in G\}\cup\parcperm {\lambda}{\omega}^{V_0}
\end{displaymath}
is  ${\omega}$-homogeneous, and $r$ is   $G^*$-large. 
\end{lemma}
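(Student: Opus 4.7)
The lemma has two independent conclusions---the $\omega$-homogeneity of $G^*$ and the $G^*$-largeness of $r$---which I would prove separately.

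\emph{$\omega$-homogeneity of $G^*$.} Given $X,Y\in\br{\lambda};\omega;$ with $|\lambda\setm X|=|\lambda\setm Y|=\lambda$, apply hypothesis~(1) to obtain $X_0,Y_0\in\br{\lambda};\omega;\cap V_0$ with $X\subs X_0$ and $Y\subs Y_0$. Inside $V_0$ pick disjoint countable $Z_X,Z_Y\subs\omega_1$ avoiding $X_0\cup Y_0$ with $|Z_X|=|X_0\setm\omega_1|$ and $|Z_Y|=|Y_0\setm\omega_1|$, and let $\pi_X,\pi_Y\in\parcperm\lambda\omega^{V_0}$ be the $V_0$-involutions swapping $X_0\setm\omega_1\leftrightarrow Z_X$ and $Y_0\setm\omega_1\leftrightarrow Z_Y$. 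Then $\pi_X[X]$ and $\pi_Y[Y]$ are countable subsets of $\omega_1$ with $\omega_1$-large complements inside $\omega_1$, so by the assumed $\omega$-homogeneity of $G$ on $\omega_1$ some $g\in G$ maps $\pi_X[X]$ onto $\pi_Y[Y]$; then $\pi_Y^{-1}\circ\extend{g}\circ\pi_X\in G^*$ sends $X$ to $Y$.

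\emph{$G^*$-largeness of $r$.} Since $r\subs\omega\times\omega$, it suffices to establish the auxiliary claim: for each $h\in G^*$ there is $g_h\in G$ with $h\cap(\omega\times\omega)\subs g_h$. Indeed, any finite $\mc H\subs G^*$ then yields a finite $\{g_h:h\in\mc H\}\subs G$ covering $\bigcup\mc H\cap(\omega\times\omega)$, and the $G$-largeness of $r$ forces $|r\setm\bigcup\mc H|=\omega$.

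To produce $g_h$, fix a factorisation $h=h_{k-1}\circ\cdots\circ h_0$ into generators and let $F$ index the factors in $\parcperm\lambda\omega^{V_0}$; the remaining factors have the form $\extend{g_i}$ with $g_i\in G$. The finite collection $\{h_i:i\in F\}$ is a finite subset of $V_0$, hence belongs to $V_0$. Starting from $\omega$, I would alternately close under the $V_0$-maps $h_i^{\pm 1}$ (for $i\in F$, performed inside $V_0$) and under the $V_1$-maps $g_i^{\pm 1}$, absorbing each new countable $V_1$-enlargement into a countable $V_0$-superset via~(1) and then re-closing. After $\omega$ rounds this yields a countable $V_0$-set $C^*\supseteq\omega$ that captures every $h$-orbit starting in $\omega$ and is closed under all the relevant operations. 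In $V_0$ choose a countable $Z\subs\omega_1\setm(\omega\cup C^*)$ equinumerous with $C^*\setm\omega_1$ together with a $V_0$-bijection between them, and let $\pi\in\parcperm\lambda\omega^{V_0}$ be the involution swapping them. Since $\pi$ fixes $\omega$ pointwise, $h$ and $\pi^{-1}h\pi$ agree on $\omega\times\omega$. Expanding $\pi^{-1}h\pi=\prod_i(\pi^{-1}h_i\pi)$, each factor with $i\in F$ has support inside $\omega_1$ and so lies in $\parcperm{\omega_1}\omega^{V_0}\subs G$, whereas each $\pi^{-1}\extend{g_i}\pi$ agrees with $\extend{g_i}$ on the trajectory-relevant part $C^*\cap\omega_1$ thanks to the $g_i$-closure of $C^*$ and the disjointness of $Z$ from $C^*$. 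Assembling the corresponding pieces inside $G$ produces the required $g_h$.

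The main obstacle I anticipate is the alternating-closure step: each application of a $V_1$-factor carries us out of $V_0$ and forces a fresh appeal to~(1), after which further $V_0$-closure may destroy previously attained properties. One must verify that after countably many rounds the construction stabilises so that $C^*$ simultaneously enjoys all the closure conditions needed to make the conjugation-by-$\pi$ trick transform the $G^*$-factors into genuine $G$-elements on every orbit issuing from $\omega$.
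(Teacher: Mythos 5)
Your proof of the $\omega$-homogeneity of $G^*$ is correct and is essentially the paper's argument: both reduce to the homogeneity of $G$ on $\omega_1$ by first moving $X$ and $Y$ into $\omega_1$ with elements of $\parcperm{\lambda}{\omega}^{V_0}$ built from $V_0$-supersets $X_0,Y_0$ supplied by hypothesis (1) (the paper maps $X_0$ onto $\omega$, you swap $X_0\setm\omega_1$ into $\omega_1$; the difference is immaterial).

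The largeness half has a genuine gap at ``assembling the corresponding pieces inside $G$''. You ask for a \emph{single} $g_h\in G$ with $h\cap(\omega\times\omega)\subs g_h$, but your conjugation produces, for each factor $\extend{g_i}$, a partial map which is $g_i$ on the trajectory points lying in $C^*\cap\omega_1$ and the \emph{identity} on the points of $Z$ (the $\pi$-images of the trajectory points in $C^*\setm\omega_1$, where $\extend{g_i}$ acts as the identity). This mixed map need not be a restriction of any element of $G$: it is not in $V_0$ (because $g_i$ need not be), so it is not absorbed by $\parcperm{\omega_1}{\omega}^{V_0}$, and nothing forces it to be covered by a single word in $G$. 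Already $h=f_1\circ\extend{g}\circ f_0$ with $f_0,f_1\in\parcperm{\lambda}{\omega}^{V_0}$ exhibits the problem: $h\cap(\omega\times\omega)$ splits according to whether $f_0(\alpha)\in\omega_1$ or not, the two pieces are covered by different words (one containing $g$, one not), and for the generic $g$'s of Theorem \ref{tm:aux} no single element of $G$ contains both. The repair is the paper's route: record, for each trajectory, which factors act nontrivially, and cover $h\cap(\omega\times\omega)$ by the finitely many subproducts $f'_{i_0}\circ\dots\circ f'_{i_k}$ (Claims \ref{cl:starless} and \ref{cl:lambdaoo}); a finite subset of $G$ suffices for largeness, as you yourself note at the start of this half. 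Two further slips in the same step are repairable but real: $\pi^{-1}\circ h_i\circ\pi$ for $i\in F$ need not have support in $\omega_1$ (the support of $h_i$ off $C^*$ is untouched by $\pi$), so you must first restrict it to the relevant countable $V_0$-set and re-extend inside $\omega_1$ as in Claim \ref{cl:hrestriction}; and the union $C^*$ of your $\omega$ rounds of $V_0$-sets chosen in $V_1$ need not itself lie in $V_0$, so $\pi$ must be built from one more $V_0$-superset supplied by hypothesis (1), while the closure properties are only guaranteed for $C^*$.
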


\begin{proof}
We will work in $V_1$.
 
First we show that $G^*$ is ${\omega}$-homogeneous.

If $X,Y\in\br{\lambda};{\omega};$ first pick 
$X_0,Y_0\in\br{\lambda};{\omega};\cap V_0$ with $X\subs X_0$ and
$Y\subs Y_0$ such that $|X_0\setminus X|=|Y_0\setminus Y|={{\omega}}$.
Fix $f,h\in\parcperm{\lambda}{\omega}^{ V_0}$ with $f[X_0]={{\omega}}$
and $h[Y_0]={\omega}$.
Since $G$ is ${\omega}$-homogeneous, there is  $g\in G$ with
$g\bigl [f[X]\bigr]=h[Y]$. Then $(h^{-1}\circ \extend g\circ f)[X]=Y$ and
$h^{-1}\circ \extend g\circ f\in G^*$.

Before proving  that $r$ is $G^*$-large we need some preparation.
Write
\begin{displaymath}
	G^+=\{g^+:g\in G\}.
\end{displaymath}

\begin{claim}\label{cl:hrestriction}
If $h_0, \dots h_k\in \parcperm{{\lambda}}{{\omega}}^{V_0}$
and $A\in \br {\omega}_1;{\omega};$ then there is 
$h\in \parcperm{{\omega}_1}{{\omega}}^{V_0}$
such that 
\begin{displaymath}
(h_0\circ\dots\circ h_k)\cap (A\times A)\subs h.
\end{displaymath}	
\end{claim}

\begin{proof}[Proof of the Claim \ref{cl:hrestriction}]
By (1) we can assume that $A\in V_0$, and so 
$h'=(h_0\circ\dots\circ h_k)\cap (A\times A)\in V_0$.
Since $h'$ is a countable injective function 
with $\dom(h')\cup \ran(h')\subs {\omega}_1$ it can be extended to 
a permutation $h\in  \parcperm{{\omega}_1}{{\omega}}^{V_0}$.
\end{proof}

If $\mc F$ is a set of functions,
let
\begin{displaymath}
\gen{\mc F}=\{f_0\circ\dots \circ f_{n-1}:n\in {\omega}, f_i\in \mc F 
\text{ for $i<n$}\}.
\end{displaymath}

	\begin{claim}\label{cl:starless}
		For each   $t\in \gen{G^+\cup \parcperm{{\lambda}}{{\omega}}^{V_0}}$ 		t
		there is a finite set 
		$\mc H\subs \gen{G\cup \parcperm{{\lambda}}{{\omega}}^{V_0}}$ 
		such that  
\begin{displaymath}
t\subs \bigcup \mc H.
\end{displaymath}
			\end{claim}

\begin{proof}[Proof of the Claim  \ref{cl:starless}]
If $t=f_0\circ\dots\circ f_{n-1}$, let
\begin{displaymath}
\mc H=\{\operatorname{id}_{\lambda}\}\cup \{f'_{i_0}\circ \dots \circ f'_{i_j}\circ \dots \circ f'_{i_k}: k\le n,
i_0<\dots <i_j<\dots <i_k<n\},
\end{displaymath}
where $f'_i=f_i$ if $f_i\in \parcperm{{\lambda}}{{\omega}}^{V_0},$
	and $f_i=g$ if $f_i=g^+$ for some $g\in G$,
and $\operatorname{id}_{\lambda}$ denotes the identity function on
	${\lambda}$.

Pick ${\alpha}\in {\lambda}$ such that $t({\alpha})\ne {\alpha}$.

Write
${\alpha}_n={\alpha}$ and ${\alpha}_i=f_i({\alpha}_{i+1})$
for $i=n-1,\dots, 0$. Let $0\le i_0<i_1\dots <i_\ell < n$
be the increasing enumeration of the set $\{i<n: {\alpha}_i\ne {\alpha}_{i+1}\}$.
Let $s=f'_{i_0}\circ\dots\circ f'_{i_\ell}$. Then
$s\in \mc H$ and  $s({\alpha})=t({\alpha})$.
\end{proof}

\begin{claim}\label{cl:lambdaoo}
	For each   $s\in \gen{G\cup \parcperm{{\lambda}}{{\omega}}^{V_0}}$
	and countable set $A\in \br {\omega}_1;{\omega};$
	there is  $u \in \gen{G\cup \parcperm{{\omega}_1}{{\omega}}^{V_0}}$
	such that 
	 $$s\cap (A\times A)\subs u.$$ 
		\end{claim}

\begin{proof}[Proof of the Claim \ref{cl:lambdaoo}]
Since both $G$ and $\parcperm{{\lambda}}{{\omega}}^{V_0}$
are groups we can assume that 
\begin{displaymath}
s=g_0\circ h_0\circ \dots\circ g_n\circ h_n,
\end{displaymath}
where $g_i\in G$ and $h_i\in \parcperm{{\lambda}}{{\omega}}^{V_0}$.

Write $A_{n}=A$, and let  $B_i=h_i[A_{i+1}]\cap {\omega}_1$
and $A_i=g_i[B_i]$ for $i=n-1,\dots,0$. 

By Claim \ref{cl:hrestriction} for each $i$ there is $h'_i\in \parcperm{{\omega}_1}{{\omega}}^{V_0}$
such that $h_i\cap (A_{i+1}\times B_i)\subs h_i'$.

Let $u=g_0\circ h_0'\circ \dots\circ  g_n\circ  h'_n$.

We show that $s\cap (A\times A)\subs u$.

Fix ${\alpha}\in A$.  Let ${\alpha}_n={\alpha}$ and for $i=n-1,\dots,0$
let ${\beta}_i=h_i({\alpha}_{i+1})$ and ${\alpha}_i=g_i({\beta}_i)$.
If $s({\alpha})$ is defined and $s({\alpha})\in A$, then 
for each $i<n$  we have ${\beta}_i\in B_i$ and $ {\alpha}_i\in A_i$,
and so $u({\alpha})$ is also defined and $u({\alpha})=s({\alpha}) $.
\end{proof}

Putting together Claims \ref{cl:starless} and \ref{cl:lambdaoo}
we obtain that 
\begin{claim}\label{cl:rlarge}
For each $g\in G^*$ there is a finite subset $H_g$ of $G$ such that 
$$g\cap({\omega}\times {\omega})\subs \bigcup\{h\restriction {\omega}: h\in H_g\}.$$  	
\end{claim}	

Claim \ref{cl:rlarge} yields that 
 $r$ is $G^*$-large.

So we proved the $G^*$ is ${\omega}$-intransitive which completes the 
proof of the lemma.
\end{proof}

By Lemma \ref{lm:steppingup}   the following theorem yields theorem
\ref{tm:ohnot_con}.
\begin{theorem} \label{tm:aux} If  $P=\fin(2^{\omega},2)$
then
$V^P\models$
{\em ``there is an ${\omega}$-homogeneous and  ${\omega}$-intransitive
permutation group $G$ on ${{\omega}_1}$ with
$G\supset\parcperm {\omega_1}{\omega}^V$''.}
 \end{theorem}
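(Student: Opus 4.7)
The plan is to inductively construct $r\in\perm{\omega}$ and a chain of generating families $\mc F_\alpha$ for the group $G$, assigning an independent Cohen real to $r$ and to each new generator $g_\alpha$, so that $r$-largeness is preserved by a stagewise density argument in the spirit of Lemma~\ref{lm:induction-step}.

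\textbf{Choice of $r$.} Factor $P\cong Q\times P'$ where $Q$ is a single Cohen forcing and $P'\cong P$. Let $r\in\perm{\omega}$ be the permutation coded by the $Q$-generic, chosen (via the equivalent Cohen-style forcing whose conditions are finite fixed-point-free partial injections of $\omega$) to satisfy $r(n)\ne n$ for all $n$. A standard density argument yields that $r$ is $\parcperm{\omega_1}{\omega}^V$-large: for each finite $\mc H\subs\parcperm{\omega_1}{\omega}^V$ the set $\bigcup\mc H\cap(\omega\times\omega)$ is a $V$-definable partial function, and Cohen genericity forces $|r\setm\bigcup\mc H|=\omega$.

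\textbf{Construction of $G$.} In $V^P$ enumerate all pairs $(X_\alpha,Y_\alpha)_{\alpha<\kappa}$ of countable subsets of $\omega_1$ with $|\omega_1\setm X_\alpha|=|\omega_1\setm Y_\alpha|=\omega_1$. Further factor $P'$ as a product of $\kappa$ copies of Cohen forcing, assigning one copy to each index $\alpha$. Inductively: $\mc F_0=\parcperm{\omega_1}{\omega}^V$; at stage $\alpha+1$, by ccc pick a $V$-countable container $A_\alpha\supseteq X_\alpha\cup Y_\alpha$ with $|\omega_1\setm A_\alpha|=\omega_1$, and from the $\alpha$-th independent Cohen real of $P'$ extract a Cohen-generic bijection $g_\alpha$ of $A_\alpha$ with $g_\alpha[X_\alpha]=Y_\alpha$; extend by identity so $g_\alpha^+\in\parcperm{\omega_1}{\omega}$, and set $\mc F_{\alpha+1}=\mc F_\alpha\cup\{g_\alpha^+\}$, taking unions at limits. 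Define $G=\langle\mc F_\kappa\rangle$.

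\textbf{Verification.} By construction $G\supseteq\parcperm{\omega_1}{\omega}^V$, and $\omega$-homogeneity is immediate, since any pair $(X,Y)\in V^P$ with large complements appears as some $(X_\alpha,Y_\alpha)$, and $g_\alpha^+[X]=Y$. For $\omega$-intransitivity we show $r$ is $G$-large: each $h\in G$ is a finite composition of generators, so any finite $\mc H\subs G$ involves only finitely many $g_\alpha$'s and elements of $\parcperm{\omega_1}{\omega}^V$; hence $\bigcup\mc H\cap(\omega\times\omega)$ is definable in $V$ from these finitely many Cohen reals. Since $r$ arises from the disjoint Cohen real ($Q$-generic), by mutual independence of $Q$ and the finitely many $P'$-coordinates involved, $r$ is Cohen-generic over the submodel they generate, and density forces $|r\setm\bigcup\mc H|=\omega$.

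\textbf{Main obstacle.} The principal challenge is that the family $\mc F_\alpha$ grows to size $\kappa>\omega$, so Lemma~\ref{lm:induction-step} as stated (which requires $|\mc F|\le\omega$) does not apply directly; nor is Step~2's inductive preservation of term-largeness automatic. The resolution exploits Cohen forcing's product structure: every finite term in $\mc F_\kappa$ involves only finitely many generators, each determined by its dedicated independent Cohen real, so the escape condition at each stage reduces to a density statement in a finite-product Cohen sub-forcing — handled by the mutual genericity built into our factorization. The subtlety of enumerating $V^P$-pairs in a way compatible with a fixed factorization of $P'$ is addressed by performing the factorization in advance (with $\kappa$-many coordinates) so that each $(X_\alpha,Y_\alpha)$ receives its own fresh Cohen real independent of $r$, after which the term-largeness preservation for $r$ follows uniformly from one density argument.
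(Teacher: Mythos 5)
There is a genuine gap in your verification of $\omega$-intransitivity. You place $r$ on one coordinate of the product $P\cong Q\times P'$ and argue that, since a finite $\mc H\subs G$ involves only finitely many generators $g_{\alpha}$, each tied to its own dedicated $P'$-coordinate, $r$ is Cohen-generic over the model these generate, whence $|r\setminus\bigcup\mc H|=\omega$ by density. But the generators cannot all live in $V[G_{P'}]$: $\omega$-homogeneity forces you to handle \emph{every} pair $(X_\alpha,Y_\alpha)$ of countable subsets of ${\omega}_1$ existing in $V^P$, including pairs definable from $r$ itself. For such a pair, the forcing of finite partial bijections respecting $X_\alpha\mapsto Y_\alpha$ is only defined in a model already containing $r$, so the ``extraction'' of $g_\alpha$ from the $\alpha$-th Cohen real goes through an isomorphism depending on $r$; consequently $g_\alpha\notin V[G_{P'}]$, and $r$ is \emph{not} generic over $V[g_\alpha]$. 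Mutual genericity of the coordinates of $P$ does not transfer to mutual genericity of $r$ with the derived objects $g_\alpha$, so the density argument you invoke has no model to run over. (Even in the other direction, genericity of $g_\alpha$ over $V[r]$ does not by itself give that $r$ stays large for terms mixing $g_\alpha$ with the earlier generators: that is a statement about the interaction of the two generics and must be proved.)

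What is needed, and what the paper supplies, is a stagewise preservation lemma: one iterates, at stage $\nu$ forcing with $Q_\nu$ (conditions $p_0\cup p_1$ with $p_0\in\bijp(X_\nu,Y_\nu)$ and $p_1\in\bijp(Z_\nu\setminus X_\nu,Z_\nu\setminus Y_\nu)$) \emph{over a model that already contains $r$}, and shows by density in $Q_\nu$ that for every finite set $\mc H$ of $\fx{G_\nu}{x}$-terms and every $M$ there are $q\le p$ and ${\alpha}>M$ with $t[q]({\alpha})$ defined and different from $r({\alpha})$ for all $t\in\mc H$ (Lemma \ref{lm:cohenextension}). The heart of that density argument is the term-shortening combinatorics of Lemma \ref{lm:induction-step-step}, which you gesture at in your opening sentence but do not actually use where it is needed. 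Your choice of $r$ and your $\omega$-homogeneity argument are fine and match the paper; the intransitivity half has to be redone along these lines.
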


\begin{proof}
Given  sets $X$ and $Y$ let us denote by
$\bijp(X,Y)$  the set of all finite bijections between subsets of $X$
and $Y$.

We will define an iterated forcing system
with finite support
$$\<P_{\nu}:0\le{\nu}\le2^{\omega},\mc Q_{\nu}:-1\le{\nu}<2^{\omega}\>$$
and an increasing sequence of permutation groups
$\<G_{\nu}:\nu<{2^{\omega}}\>$,
$G_{\nu}\triangleleft\perm \omega^{V^{P_{\nu}}} $, simultaneously.

Take $G_0=\parcperm{{\omega}_1}{\omega}^V$ and 
$P_0=\mc Q_{-1}=\bijp({\omega},{\omega})$.
Denote by $r$ the generic  permutation of ${\omega}$  
given by the $V$-generic filter over $P_0$. 
By standard density arguments it is easy to see that
$r$ is $G_0$-large.
Now we  carry out the inductive construction as follows:
\begin{itemize}
\item for each ${\nu}<{{2^{\omega}}}$ we pick
$X_{\nu}$, $Y_{\nu},Z_{\nu}\in(\br{{\omega}_1};{\omega};)^{V^{P_{\nu}}}$ 
with $X_{\nu}\cup Y_{\nu}\subs Z_{\nu}$
and $|Z_{\nu}\setm X_{\nu}|=|Z\setm Y_{\nu}|={\omega}$,
\item put 
\begin{displaymath}
Q_{\nu}=\{p_0\cup p_1: p_o\in \bijp(X_{\nu},Y_{\nu}), p_1\in  
\bijp(Z_{\nu}\setm X_{\nu},Z_{\nu}\setm Y_{\nu})\},
\end{displaymath}
$\mc Q_{\nu}=\<Q_{\nu},\supset\>$
and $g_{\nu}=\bigcup{\mc G}_{\nu}$, where 
${\mc G}_{\nu}$ is the $\mc Q_{\nu}$-generic filter over $V^{P_{\nu}}$,
\item take   $G_{{\nu}+1}$ as the subgroup of
$\perm {{\omega}_1}^{V^{P_{{\nu}+1}}}$ generated by
$G_{\nu}\cup\{\extend {g_{\nu}}\}$.
\item for limit ${\nu}$ let 
$G_{\nu}=\bigcup_{{\zeta}<{\nu}}G_{\zeta}$.
\end{itemize}

We use a bookkeeping function to ensure
that every pair $X,Y\in(\br{{{\omega}}};{\omega};)^
{V^{P_{2^{\omega_1}}}}$   
will be chosen as $X_\nu$, $Y_\nu$ in some step.
Then $G=\bigcup_{{\nu}<2^{{\omega}}}G_{\nu}$ will
be ${\omega}$-homogeneous.

So the question is whether we guarantee that 
$r$ is $G_{\nu}$-large  during the induction.

If ${\nu}$ is a limit ordinal, then $G_{\nu}=\bigcup_{{\zeta}<{\nu}}G_{\zeta}$,
so if $r$ is $G_{\zeta}$-large for ${\zeta}<{\nu}$, then 
$r$ is $G_{\nu}$-large as well.

Assume now that $r$ is $G_{\nu}$-large and prove that 
$r$ is $G_{{\nu}+1}$-large as well.

The following lemma clearly implies this statement.
In this lemma we use some notations introduced
in Definition \ref{df:term-eval} in the previous section.
\begin{lemma}\label{lm:cohenextension}
If $\mc H$ is a finite set of $\fx{G_{\nu}}{x}$-terms, $p\in Q_{\nu}$,
$M$ is a natural number, 
then there is a condition $q\le p$ in $Q_{\nu}$
and there is ${\alpha}\in {\omega}\setm M$
such that $t[q]({\alpha})$ is defined for each $t\in \mc H$
and $t[q]({\alpha})\ne r({\alpha})$.
\end{lemma}

\begin{proof}[Proof of the lemma]
	We can assume that $\mc H$ is closed for subterms.

	We know that $|r\setm \bigcup \mc H[\ ]|={\omega}$ 
	because $r$ is $G_{\nu}$-large.

	Since $\mc H$ is closed for subterms, 
	$$y\cap \bigcup \mc H[\ ]=y\cap \bigcup \mc H[
		\operatorname{id}_{{\omega}_1\setm Z_{\nu}} ].$$
	Since $|p|<{\omega}$, we have 
	\begin{displaymath}
	|y\setm \bigcup \mc H[p\cup \operatorname{id}_{({\lambda}\setminus Z_{\nu})} ]|={\omega}.
	\end{displaymath}
	So we can pick ${\alpha}\in {\omega}\setm M$
	such that 
	 \begin{enumerate}[(i)]
		\item[$(*)$] for each $t\in \mc H$ either  
		$t[p\cup \operatorname{id}_{{\lambda}\setm Z_{\nu}}]({\alpha})$ 
		is undefined
	 or	$t[p\cup \operatorname{id}_{{\lambda}\setm Z_{\nu}}]({\alpha})
	 \ne r({\alpha})$.
		 \end{enumerate}
		
	Now in finitely many steps, using Lemma \ref{lm:induction-step-step},
	we can extend the function $p\in Q_{\nu}$ to a function 
	$q\in Q_{\nu}$ such that 
	\begin{enumerate}[(i)]
		\item[$(*)$] $t[{q}\cup \operatorname{id}_{{\lambda}\setm Z_{\nu}}]
		({\alpha})$ 
		is defined and 
	 $t[q\cup \operatorname{id}_{{\lambda}\setm Z_{\nu}}]({\alpha}_)\ne 
	 r({\alpha})$
		for each $t\in \mc H$.
		 \end{enumerate}
	
	Indeed, if $t[{q'}\cup \operatorname{id}_{{\lambda}\setm Z_{\nu}}]({\alpha})$ 
	is not defined, where $t=\<t_0,\dots, t_n\>$ then there is $i<n$
	such that  
	either 
	\begin{enumerate}[(1)]
	\item[] ${\zeta}_i=\<t_{i+1},\dots, t_n\>[{q'}\cup \operatorname{id}_{{\lambda}\setm Z_{\nu}}]({\alpha})$
	is defined, 
	$t_{i}=x$ and ${\zeta}_i\notin \dom(q')$ 
	\end{enumerate}
	or  
	\begin{enumerate}[(1)]
		\item[] ${\zeta}=\<t_{i+1},\dots, t_n\>[{g'}\cup \operatorname{id}_{{\lambda}\setm Z_{\nu}}]({\alpha})$
		is defined, $t_{i}=x^{-1}$ and ${\zeta}_i\notin \ran(q').$ 
		\end{enumerate}
	In both cases, using Lemma \ref{lm:induction-step-step}, 
	we can extend $q'$ to $q''$ such that
	$\<t_i,\dots, t_n\>[{q''}\cup \operatorname{id}_{{\lambda}\setm Z_{\nu}}]({\alpha})$
	is defined and $\<{\alpha},r({\alpha})\>\notin \mc H[q''\cup id_{{\lambda}\setm Z_{\nu}}]$.	
	So we proved Lemma \ref{lm:cohenextension}.
\end{proof}

So  $r$ is  $G_{{\nu}+1}$-large.

Thus, by transfinite induction,  we proved that $r$ is $G$-large
which completes the proof of the theorem.
\end{proof}

\end{document}